\newtheorem{theorem}{Theorem}[section]
\newtheorem{lemma}[theorem]{Lemma}
\newtheorem{prop}[theorem]{Proposition}
\newtheorem{question}[theorem]{Question}
\theoremstyle{definition}
\newtheorem{definition}[theorem]{Definition}
\newtheorem{claim}{Claim}
\theoremstyle{remark}
\newtheorem{remark}[theorem]{Remark}
\numberwithin{equation}{section}
\newcommand{\D}{\mathbb{D}}
\newcommand{\cD}{\overline{\D}}
\newcommand{\C}{\mathbb{C}}
\newcommand{\T}{\mathbb{T}}
\newcommand{\al}{\alpha}
\newcommand{\ip}[2]{\langle #1, #2 \rangle}
\title{Stable symmetric polynomials and the Schur-Agler class}
\author{Greg Knese}
\address{University of Alabama, Tuscaloosa, AL, 35487-0350}
\date{\today}
\email{geknese@bama.ua.edu}
\keywords{Schur-Agler class, polydisk, polydisc, Agler decomposition,
  transfer function, Grace-Walsh-Szeg\H{o}, multi-affine, stable
  polynomial, symmetric polynomial}
\thanks{This research was supported by NSF grant DMS-1001791}
\subjclass{Primary 47A57; Secondary 30C15, 42B05}
\begin{document}
\bibliographystyle{apalike}
\maketitle

\begin{abstract} 
We call a multivariable polynomial an \emph{Agler denominator} if it
is the denominator of a rational inner function in the Schur-Agler
class, an important subclass of the bounded analytic functions on the
polydisk.  We give a necessary and sufficient condition for a
multi-affine, symmetric, and stable polynomial to be an Agler
denominator and prove several consequences.  We also sharpen a result
due to Kummert related to three variable, multi-affine, stable
polynomials.
\end{abstract} 

\section{Introduction}
We say a multivariable polynomial $p \in \C[z_1,\dots,z_n]$ is
\emph{stable} if $p$ has no zeros on the closed polydisk $\cD^n = \cD
\times \dots \times \cD$. ``Stable'' can refer to many variations on
this idea, but we will stick with this definition throughout. Stable
polynomials in their various related incarnations appear in complex
analysis, orthogonal polynomials (see \cite{bS05}), combinatorics, and
statistical mechanics (see \cite{dR10} or see \cite{dW10} for a survey
related to these last two).  In particular, the paper \cite{dR10}
focuses on the class of ``Lee-Yang polynomials'' which satisfy a
``non-strict'' form of stability, but are nonetheless closely related
to the polynomials we study here.

This article has two goals: (1) further develop properties and
examples of the Schur-Agler class on the polydisk, and (2) unify and
explore connections between the following two classical theorems
related to one variable polynomials.  (We postpone discussion of the
Schur-Agler class until Definition \ref{def:Agler}.)

\begin{theorem}[The Christoffel-Darboux formula]
Let $p \in \C[z]$ be a stable one variable polynomial of degree $d$
and write
\[
\tilde{p}(z) = z^d \overline{p(1/\bar{z})}.
\]
Then, there exist linearly independent polynomials $A_1,\dots, A_d \in
\C[z]$ such that
\[
\frac{|p(z)|^2 - |\tilde{p}(z)|^2}{1-|z|^2} = \sum_{j=1}^{d}
|A_j(z)|^2
\]
\end{theorem}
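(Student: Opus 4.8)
The plan is to use stability to recognize $f:=\tilde p/p$ as a rational inner function of degree $d$, and then to transport the classical reproducing‑kernel structure attached to $f$ back through multiplication by $p$. Writing $p=\sum_{k=0}^d c_kz^k$ with $c_d\neq 0$, stability gives $p(0)\neq 0$ and that all $d$ zeros of $p$ lie outside $\cD$; on the other hand $\tilde p(z)=z^d\overline{p(1/\bar z)}$ satisfies $\tilde p(0)=\overline{c_d}\neq 0$ and vanishes exactly at the reflections $1/\bar b$ of the zeros $b$ of $p$, hence all $d$ of its zeros lie in $\D$. Since $|\tilde p|=|p|$ on $\T$, the quotient $f=\tilde p/p$ is holomorphic on a neighbourhood of $\cD$, unimodular on $\T$, with $d$ zeros in $\D$ and $d$ poles outside $\cD$: a finite Blaschke product of degree $d$.

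Next I would invoke the model (de Branges--Rovnyak) space $\mcH(f)=H^2\ominus fH^2$. Because $f$ is a degree‑$d$ Blaschke product, $\dim\mcH(f)=d$ and its reproducing kernel is $\big(1-f(z)\overline{f(w)}\big)/(1-z\bar w)$. Picking an orthonormal basis $e_1,\dots,e_d$ of $\mcH(f)$ and setting $w=z$ in the kernel identity gives
\[
\sum_{j=1}^d |e_j(z)|^2=\frac{1-|f(z)|^2}{1-|z|^2}.
\]
Multiplying by $|p(z)|^2$ and using $|p|^2-|\tilde p|^2=|p|^2(1-|f|^2)$ turns this into $\sum_{j=1}^d |p(z)e_j(z)|^2=\big(|p(z)|^2-|\tilde p(z)|^2\big)/(1-|z|^2)$, so $A_j:=p\,e_j$ are the required functions. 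It remains to observe that each $A_j$ is a polynomial of degree $\le d-1$ — every element of $\mcH(f)$ is a rational function whose only possible poles are the poles of $f$, namely the zeros of $p$ with no larger multiplicity, so multiplication by $p$ clears all poles, and a degree count (each $e_j$ vanishes at $\infty$) leaves degree $\le d-1$ — and that the $A_j$ are linearly independent, which holds because the $e_j$ are and multiplication by the non‑zero polynomial $p$ is injective. Since $\sum|A_j|^2$ is then a genuine polynomial in $z,\bar z$, the apparent singularity of the left‑hand side on $\T$ is automatically removable.

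The one genuinely non‑formal ingredient, and the step I expect to be the main obstacle to present cleanly, is the positivity behind the kernel identity — equivalently, that $\mcH(f)$ is a well‑defined Hilbert space of the stated dimension. For a reader preferring to avoid de Branges--Rovnyak theory I would argue directly: the polynomial $N(z,\bar w):=p(z)\overline{p(w)}-\tilde p(z)\overline{\tilde p(w)}$ vanishes on $\{z\bar w=1\}$ (a one‑line check from $\overline{p(1/\bar z)}=z^{-d}\tilde p(z)$ and the analogous identity for $\tilde p$), so $N=(1-z\bar w)M$ for a Hermitian polynomial $M$ of degree $\le d-1$ in $z$ and in $\bar w$; since $f$ is a Schur function the kernel $\big(1-f(z)\overline{f(w)}\big)/(1-z\bar w)$ is positive semidefinite, hence so is $M(z,\bar w)=p(z)\overline{p(w)}\cdot\big(1-f(z)\overline{f(w)}\big)/(1-z\bar w)$ as a product of positive semidefinite kernels; evaluating at $d$ distinct points and inverting a Vandermonde matrix shows the coefficient matrix of $M$ is positive semidefinite; a spectral decomposition of that matrix yields polynomials $A_1,\dots,A_r$ with $M(z,\bar w)=\sum_{j=1}^r A_j(z)\overline{A_j(w)}$, and $r=d$ because $\{A_j/p\}$ spans a space whose reproducing kernel is $\big(1-f(z)\overline{f(w)}\big)/(1-z\bar w)$, which has dimension $d$. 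Either route gives the Christoffel--Darboux identity upon setting $w=z$.
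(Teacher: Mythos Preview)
Your argument is sound. The identification of $f=\tilde p/p$ as a degree-$d$ Blaschke product is correct, the model space $H^2\ominus fH^2$ has dimension $d$ with the stated reproducing kernel, and multiplication by $p$ does clear the poles of an orthonormal basis to produce $d$ linearly independent polynomials of degree at most $d-1$. Your fallback route through the factorization $p(z)\overline{p(w)}-\tilde p(z)\overline{\tilde p(w)}=(1-z\bar w)M(z,\bar w)$ and positivity of $M$ is also fine and arguably the cleanest self-contained version.

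Note, however, that the paper does not prove this theorem at all: it is quoted in the introduction as a classical fact, with a pointer to Simon's book on orthogonal polynomials on the unit circle. In that setting the formula is literally the Christoffel--Darboux identity for OPUC: one associates to $p$ the probability measure $d\mu=c\,|p|^{-2}\,d\theta$ on $\T$, and the $A_j$ arise (up to normalization) as the orthonormal polynomials $\varphi_0,\dots,\varphi_{d-1}$ for $\mu$, with $p$ proportional to $\varphi_d^{*}$. So your model-space route and the OPUC route are two faces of the same structure --- the span of your $A_j/p$ in $H^2(\mu)$ is the same $d$-dimensional space the orthogonal polynomials span --- but the presentations are genuinely different. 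The OPUC proof comes with the recurrence (Szeg\H{o}/Verblunsky) machinery for free; your approach is more portable to the operator-theoretic viewpoint the paper actually uses later (the paper only ever needs the \emph{existence} of the $A_j$ and the coefficient identity \eqref{Schur-Cohn}, both of which follow immediately from your $M$-factorization).
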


See \cite{bS05} for more information.

\begin{theorem}[Grace-Walsh-Szeg\H{o}] 
Let $p \in \C[z]$ be a stable one variable polynomial of degree $d$.
Then, the multi-affine symmetrization (defined below) $p_S \in
\C[z_1,\dots, z_d]$ of $p$ is stable.
\end{theorem}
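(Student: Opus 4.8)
The plan is to recognize this as the Grace--Walsh--Szeg\H{o} coincidence theorem and to prove it by converting a zero of $p_S$ into an \emph{apolarity} relation for one-variable polynomials. Recall the construction: if $p(z)=\sum_{k=0}^{d}a_k z^k$ then the multi-affine symmetrization is
\[
p_S(z_1,\dots,z_d)=\sum_{k=0}^{d}\frac{a_k}{\binom{d}{k}}\,e_k(z_1,\dots,z_d),
\]
the unique symmetric polynomial of degree $\le 1$ in each variable with $p_S(z,\dots,z)=p(z)$, where $e_k$ is the $k$-th elementary symmetric polynomial (uniqueness because $e_k\mapsto\binom{d}{k}z^k$ under the diagonal restriction). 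Call two degree-$d$ polynomials written as $\sum_k \alpha_k\binom{d}{k}z^k$ and $\sum_k\beta_k\binom{d}{k}z^k$ apolar if $\sum_{k=0}^{d}(-1)^k\binom{d}{k}\alpha_k\beta_{d-k}=0$. The first step is the bookkeeping identity, valid for all $\zeta_1,\dots,\zeta_d\in\C$,
\[
p_S(\zeta_1,\dots,\zeta_d)=0 \iff p(z)\ \text{and}\ \textstyle\prod_{i=1}^d(z-\zeta_i)\ \text{are apolar};
\]
this is a direct comparison of coefficients once one expands $\prod_i(z-\zeta_i)$ through the $e_k(\zeta)$ and checks that the weights $\binom{d}{k}^{-1}$ appearing in $p_S$ are precisely the ones that make the apolarity pairing of $p$ with $\prod_i(z-\zeta_i)$ equal to $p_S(\zeta_1,\dots,\zeta_d)$.

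The second step is to invoke Grace's apolarity theorem: if $f$ and $g$ are apolar polynomials of degree $d$ and every zero of $g$ lies in a circular region (a disk, a half-plane, or the complement of an open disk), then $f$ has a zero in that region. Granting this, the proof finishes at once. If $p_S$ vanished at some $(\zeta_1,\dots,\zeta_d)\in\cD^d$, then by the identity above $p$ is apolar to $\prod_i(z-\zeta_i)$, all of whose zeros $\zeta_1,\dots,\zeta_d$ lie in the closed disk $\cD$; Grace's theorem then produces a zero of $p$ in $\cD$, contradicting stability of $p$. Hence $p_S$ has no zeros on $\cD^d$, i.e.\ $p_S$ is stable.

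The genuine obstacle is Grace's theorem itself; I would either cite it as classical or reprove it by the standard induction on the degree using Laguerre's theorem on polar derivatives (if all zeros of a degree-$d$ polynomial $f$ lie in a circular region $C$ and $w\notin C$, then all zeros of $d\,f(z)+(w-z)f'(z)$ lie in $C$), the point being that the base case --- degree two, equivalently the bi-affine case --- carries all the geometry. That base case says: if $\zeta_1,\zeta_2$ lie in a convex circular region $C$ and $a\zeta_1\zeta_2+b(\zeta_1+\zeta_2)+c=0$, then $az^2+2bz+c$ has a root in $C$. I would prove it by observing that the relation expresses $\zeta_2$ as $M(\zeta_1)$ for the M\"obius involution $M(w)=-(bw+c)/(aw+b)$, whose two fixed points are exactly the roots $r_1,r_2$ of $az^2+2bz+c$. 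Assuming $\zeta_1\ne\zeta_2$ (the case $\zeta_1=\zeta_2$ being trivial, since then $\zeta_1$ is itself a root), the involution forces $\zeta_1,\zeta_2$ to be harmonic conjugates with respect to $\{r_1,r_2\}$, so all four points are concyclic and $\{r_1,r_2\}$ separates $\{\zeta_1,\zeta_2\}$ on that circle $\Gamma$; since $C\cap\Gamma$ is a single closed arc containing $\zeta_1$ and $\zeta_2$ and its complement in $\Gamma$ is a single arc avoiding both, one of the two $\zeta_1\zeta_2$-arcs lies entirely inside $C$, and that arc contains $r_1$ or $r_2$. The one remaining case to dispatch is the degenerate $a=0$, where one tracks the root at infinity and the statement collapses to the assertion that the midpoint of $\zeta_1,\zeta_2$ lies in the convex set $C$.

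Finally, I would flag the connections promised in the introduction. Each elementary symmetrization step, $f=z_1z_2A+z_1B+z_2C+D\mapsto z_1z_2A+\tfrac{z_1+z_2}{2}(B+C)+D$, preserves non-vanishing on $\cD^d$ by exactly this bi-affine case, which (together with a limiting and rescaling argument to return from the open to the closed polydisk) gives an alternative route to the theorem; and the Christoffel--Darboux formula is the one-variable analogue of the several-variable sum-of-squares (Agler-type) decomposition, whose availability for polynomials such as $p_S$ is precisely the question this paper investigates.
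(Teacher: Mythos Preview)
Your argument is the classical apolarity proof of the Grace--Walsh--Szeg\H{o} theorem and is correct: the identity equating $p_S(\zeta_1,\dots,\zeta_d)=0$ with apolarity of $p$ and $\prod_i(z-\zeta_i)$ is exactly right (the $\binom{d}{k}^{-1}$ weights in $p_S$ are precisely what make this work), and the appeal to Grace's theorem together with your sketched induction via Laguerre's polar-derivative theorem and the harmonic-conjugate analysis of the bi-affine base case is the standard route.

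There is, however, nothing in the paper to compare your proof against. The paper does not prove this theorem; it merely records it as classical background and refers the reader to the survey \cite{dW10}. The theorem is invoked only as motivation---to frame the paper's main question of whether multi-affine symmetric stable polynomials are always Agler denominators---so any correct proof, yours included, is more than the paper itself supplies.
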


See \cite{dW10} for more information and references.

Let us define the multi-affine symmetrization.  Set $[d] =
\{1,2,\dots, d\}$.  By \emph{multi-affine} we mean a polynomial which
has degree at most one in each variable separately.  For such
polynomials, it is convenient to replace multi-index notation with a
set theory notation.  Namely, if $\alpha \subset [d]$, then
\[
z^{\alpha} = \prod_{j \in \alpha} z_j, \quad z^{\varnothing} = 1.
\]
Now, if $p(z) = \sum_{j=0}^{d} p_j z^j$, then the \emph{multi-affine
  symmetrization} is given by
\[
p_S(z_1,\dots, z_d) = \sum_{\al \subset [d]} \binom{d}{|\al|}^{-1}
p_{|\al|} z^{\al}.
\]
with $|\al|$ denoting cardinality of $\al\subset [d]$.  The
multi-affine symmetrization of $p$ is the unique multi-affine
symmetric polynomial $p_S \in \C[z_1,\dots, z_d]$ with $p_S(z,z,\dots,
z) = p(z)$.  Notice symmetrization is performed at a specific degree.

The Grace-Walsh-Szeg\H{o} theorem can be useful in reducing questions
about multivariable stable polynomials to questions about multi-affine
stable polynomials by symmetrizing a given multivariable stable
polynomial in each variable separately. See \cite{dW10}, which is a
survey related to the works \cite{BB09a} and \cite{BB09b}.

It is not clear how to generalize the Christoffel-Darboux formula to
multivariable polynomials.  \emph{Two} variable stable polynomials satisfy a
Christoffel-Darboux-like formula.  If $p \in \C[z_1,z_2]$ is stable
and of multidegree $(d_1,d_2)$ (meaning degree $d_1$ in $z_1$ and
$d_2$ in $z_2$), then writing
\[
\tilde{p}(z_1,z_2) = z_1^{d_1} z_2^{d_2}
\overline{p(1/\bar{z_1},1/\bar{z_2})}
\]
we have for $z = (z_1,z_2)$
\[
|p(z)|^2 - |\tilde{p}(z)|^2 = (1-|z_1|^2) SOS_1(z)
+ (1-|z_2|^2) SOS_2(z)
\]
where the terms $SOS_1(z), SOS_2(z)$ are each a sum of squared moduli
of polynomials. Explicitly, there exist polynomials $A_1,\dots, A_N
\in \C[z]$, such that $SOS_1(z) = \sum_{j=1}^{N}|A_j(z)|^2$ and
$SOS_2(z)$ can be written in a similar way.  See \cite{CW99},
\cite{GW04}, or \cite{gK08a} for a proof of this formula.

This formula does not generalize straightforwardly to three or more
variables.  We give a special name to those polynomials for which it
does.

\begin{definition} \label{def:Agler}
We say a stable polynomial $p \in \C[z_1,\dots, z_n]$ of multidegree
$(d_1,\dots, d_n)$ is an \emph{Agler denominator} if the following
Christoffel-Darboux type of formula holds:
\begin{equation} \label{CDformula}
|p(z)|^2 - |\tilde{p}(z)|^2 = \sum_{j=1}^{n} (1-|z_i|^2) SOS_j(z)
\end{equation}
where each $SOS_j$ is a sum of squared moduli of polynomials in
$\C[z_1,\dots, z_n]$ and as usual $\tilde{p}(z_1,\dots, z_n) =
z_1^{d_1}\cdots z_n^{d_n} \overline{p(1/\bar{z_1}, \dots,
  1/\bar{z_n})}$.
\end{definition}

Let us explain the terminology.  Given a stable polynomial $p \in
\C[z_1,\dots, z_n]$, 
\[
\phi(z) = \frac{\tilde{p}(z)}{p(z)}
\]
is a rational inner function on the polydisk.  Inner just means $\phi$
has modulus $1$ almost everywhere on the $n$-torus $\T^n := (\partial
\D)^n$, and this holds in our case because $|p(z)| = |\tilde{p}(z)|$
for all $z \in \T^n$.  By the maximum principle, $\phi$ is in the
\emph{Schur class}, the set of bounded analytic functions on the
polydisk with supremum norm at most one.

If $p$ is an Agler denominator, then equation \eqref{CDformula} is
equivalent to $\phi$ being a member of a subclass of the Schur class
called the \emph{Schur-Agler class}, which we abbreviate to
\emph{Agler class}.  Such analytic functions $f$ satisfy the following
more universal bound:
\begin{equation} \label{vN}
||f(T_1,\dots, T_n)|| \leq 1
\end{equation}
for all $n$-tuples $(T_1,\dots, T_n)$ of commuting strict contractions
on a separable Hilbert space.  For $n=1,2$ the Schur class and the
Agler class coincide, but they differ for larger $n$.  See \cite{gK10}
for more background, including a discussion of the relationship
between \eqref{CDformula} and \eqref{vN}.  Due to \eqref{vN}, the
Agler class is natural from an operator theory perspective, yet it
remains poorly understood.  Agler class functions admit a nice
matricial representation (called a transfer function realization; see
\cite{gK10}) which also allows one to produce examples of Agler class
functions, but it still remains a difficult problem to determine
whether a given function is indeed in the Agler class.  In light of
all of this background, we state our motivating question.

\begin{question} Are multi-affine symmetric stable polynomials always
  Agler denominators?
\end{question}

A positive answer would mean a strengthened Grace-Walsh-Szeg\H{o}
theorem holds, while any conclusive answer would at least
enrich the study of the Agler class.  This paper represents partial
progress on this question, which we now summarize.

Theorem \ref{symAglerthm} gives a necessary and sufficient condition
for a multi-affine symmetric polynomial to be an Agler denominator in
terms of a certain $2^{d-1}\times 2^{d-1}$ matrix being positive
semi-definite (where $d$ is the number of variables).  

Our condition yields the following corollary.

\begin{theorem} \label{thm1}
  Let $p \in \C[z_1,\dots, z_n]$ be a multi-affine symmetric
  polynomial with $p(0,\dots, 0) \ne 0$.  Then, there exists an $r> 0$
  such that $p_r(z) := p(rz)$ is an Agler denominator.
\end{theorem}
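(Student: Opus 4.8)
The plan is to use the characterization from Theorem~\ref{symAglerthm}: $p$ is an Agler denominator precisely when a certain $2^{d-1}\times 2^{d-1}$ Hermitian matrix $M(p)$ built from the coefficients of $p$ is positive semi-definite. (Here $d=n$ is the number of variables, and since $p$ is multi-affine symmetric its coefficients are determined by the sequence $p_0,\dots,p_d$, equivalently by the one-variable polynomial $q$ with $q_S=p$.) The strategy is a perturbation/continuity argument: I would show that for the rescaled polynomial $p_r(z)=p(rz)$, the matrix $M(p_r)$ limits, after suitable normalization, to a matrix that is \emph{strictly} positive definite as $r\to 0$; since positive definiteness is an open condition, $M(p_r)\succeq 0$ for all sufficiently small $r>0$, which gives the conclusion.

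The key steps, in order. First, I would record how the coefficients of $p_r$ relate to those of $p$: if $p$ has the representation $\sum_{\al\subset[d]}\binom{d}{|\al|}^{-1}p_{|\al|}z^\al$, then $p_r$ has coefficients $r^{|\al|}\binom{d}{|\al|}^{-1}p_{|\al|}$, i.e.\ the degree-$k$ part is scaled by $r^k$. Crucially $p_r(0)=p(0)=p_0\neq0$, so the constant term survives. Second, I would examine the structure of the matrix $M(p_r)$ from Theorem~\ref{symAglerthm}: its entries are (conjugate-bilinear) expressions in the coefficients, so each entry is a polynomial in $r$ with the $r^0$ term coming only from products involving the constant coefficient $p_0$. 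I expect the ``leading'' ($r\to0$) behavior of $M(p_r)$ to be governed by the scalar $|p_0|^2>0$ times an explicit fixed positive-definite matrix — plausibly (a scalar multiple of) the identity, reflecting that at $r=0$ the inner function $\tilde p_r/p_r$ degenerates to the monomial $z_1\cdots z_d$ (up to a unimodular constant), which is trivially in the Agler class with a diagonal Agler decomposition. Third, having identified $M(p_r)=|p_0|^2 P_0 + O(r)$ with $P_0\succ 0$, continuity of eigenvalues gives $M(p_r)\succ0$, hence $\succeq0$, for all small $r>0$; then Theorem~\ref{symAglerthm} finishes it. (One should also note $p_r$ is stable for small $r$, which is immediate since $p$ is nonvanishing on a neighborhood of $\cD^n$ when $p(0)\neq 0$... actually stability of $p_r$ needs $p$ nonvanishing on $r\cD^n$, which holds for small $r$ since $p(0)\neq0$ — or it follows a posteriori from $M(p_r)\succeq0$.)

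The main obstacle is the second step: pinning down the $r\to0$ limit of $M(p_r)$ and verifying it is strictly positive definite rather than merely positive semi-definite. The matrix $M(p)$ in Theorem~\ref{symAglerthm} is defined through the Agler/Christoffel--Darboux machinery (it encodes the obstruction to splitting $|p|^2-|\tilde p|^2$ into the $n$ weighted sums of squares), so I need its entries explicitly enough to read off the constant-in-$r$ term and to see that the degenerate limit is nondegenerate as a matrix. If the naive limit turns out only to be positive \emph{semi}-definite, the fix is a finer expansion: isolate the kernel of $P_0$, show the next-order term in $r$ (or a combination of orders) is positive definite on that kernel using the Schur-complement form of positivity, and conclude $M(p_r)\succeq0$ for small $r$ that way. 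Either way, once the matrix is shown to be eventually positive semi-definite, invoking Theorem~\ref{symAglerthm} is immediate, and no issue of stability remains since an Agler denominator is by definition stable.
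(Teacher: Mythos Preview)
Your proposal is correct and follows essentially the same route as the paper: invoke Theorem~\ref{symAglerthm}, note that the matrix $\mathcal{B}(r)$ associated to $p_r$ depends continuously on $r$, and verify that $\mathcal{B}(0)$ is strictly positive definite so that positivity persists for small $r$. The paper's remark immediately after the proof confirms your intuition about the $r=0$ limit: normalizing to $p_0=1$, the matrix $\mathcal{B}(0)$ is diagonal with entries $\frac{1}{d}\binom{d-1}{|\al|}^{-1}>0$, corresponding exactly to the trivial Agler decomposition of the monomial $z_1\cdots z_d$ you anticipated (so your fallback Schur-complement argument is unnecessary).
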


Every polynomial with $p(0)\ne 0$ has a radius of stability (the
supremum of $r$ such that $p_r$ is stable).  (Note this concept is
called the \emph{inner radius} in \cite{dR10}.)  The above theorem
says that if we add the hypotheses multi-affine and symmetric, such
polynomials possess an ``Agler radius'' (the supremum of $r$ such that
$p_r$ is an Agler denominator) which is necessarily less than or equal
to its radius of stability.

While this theorem appears to be a modest contribution, we know of no
other non-trivial, naturally defined families of Schur class functions
which happen to be Agler class functions.  (``Trivial'' examples can
be obtained by taking convex combinations of Schur functions which
depend on only two variables. One can also construct examples by using
the earlier alluded to matricial representation of Agler class
functions.) Furthermore, our approach gives a method for constructing
sums of squares decompositions explicitly---something also not
generally well understood.  

What can be said for low numbers of variables?

It turns out that all 3 variable multi-affine stable polynomials are
Agler denominators whether symmetric or not.  This was proved in
\cite{aK89b}.  (Two decades ago the Agler class was of interest in
electrical engineering in the construction of ``wave digital filters''
in the papers \cite{aK89} and \cite{aK89b}. See also \cite{jB10}.)  We
shall give a proof of this fact in the appendix, since while it does
not follow the main thrust of this paper, it is nonetheless closely
related and we are able to sharpen Kummert's result slightly in the
following theorem.

\begin{theorem} \label{sharpkummert} 
If $p \in \C[z_1,z_2,z_3]$ is multi-affine and stable, then there
exist sums of squares terms such that
\[
|p|^2 - |\tilde{p}|^2 = \sum_{j=1}^{3} (1-|z_j|^2)SOS_j(z)
\]
where $SOS_3$ is a sum of two squares, while $SOS_1$, $SOS_2$ are sums
of four squares.
\end{theorem}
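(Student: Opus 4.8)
The plan is to start from Kummert's theorem \cite{aK89b} (reproved in the appendix), which already guarantees that \eqref{CDformula} holds for the multi-affine stable $p\in\C[z_1,z_2,z_3]$ with \emph{some} choice of sums of squares, and then to control the sizes of the three terms. First I would dispose of degenerate cases: if $p$ fails to have full multidegree $(1,1,1)$ it depends on at most two variables, and the two-variable Christoffel--Darboux formula (with the third $SOS$-term taken to be zero) already gives the conclusion, so assume the multidegree is $(1,1,1)$. Next I would arrange the decomposition to be \emph{degree normalized}, meaning each $SOS_j$ has degree $0$ in $z_j$ and degree at most $1$ in each of the other two variables; equivalently, $SOS_j$ is a Hermitian sum of squares of polynomials that are multi-affine in the two variables other than $z_j$. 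Since the space of such polynomials is four-dimensional, the Gram matrix of such an $SOS_j$ is $4\times 4$, and so $SOS_j$ is automatically a sum of at most four squares. This already yields the stated bounds on $SOS_1$ and $SOS_2$, and the entire content of the sharpening is then the reduction of $SOS_3$ from four squares to two.

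To reduce $SOS_3$, I would slice. Fixing $\lambda$ on the unit circle $\T$ and substituting $z_1=\lambda$ in \eqref{CDformula} kills the term $(1-|z_1|^2)SOS_1$, and since $SOS_3$ does not involve $z_3$ what remains is a two-variable Christoffel--Darboux decomposition, in the variables $(z_2,z_3)$, of the stable polynomial $z\mapsto p(\lambda,z_2,z_3)$, which has multidegree $(1,1)$. For a stable polynomial of multidegree $(1,1)$ each term of the Christoffel--Darboux decomposition is a single square, so the $z_3$-term there is a single square; comparing, this forces $z_2\mapsto SOS_3(\lambda,z_2)$ to be a single square, i.e. to have a $2\times2$ Gram matrix of rank at most one, for every $\lambda\in\T$. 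By symmetry, freezing $z_2=\mu\in\T$ shows $z_1\mapsto SOS_3(z_1,\mu)$ has rank at most one for every $\mu\in\T$. A short linear-algebra lemma then finishes the job: a positive semidefinite Hermitian form in $(z_1,z_2)$ that is multi-affine in each variable, and whose restriction to $z_1\in\T$ and whose restriction to $z_2\in\T$ each have Gram matrix of rank at most one, must itself have Gram matrix of rank at most two; hence $SOS_3$ is a sum of at most two squares.

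The hard part is that the two slice constraints are not automatic for an arbitrary decomposition produced by Kummert's argument: even a degree-normalized $SOS_3$ need not, when a variable is frozen on $\T$, reproduce the minimal two-variable decomposition of the slice, but could restrict to a higher-rank piece of some non-minimal one. So the real work is to build the three-variable decomposition with enough control over its boundary behavior to force this. The natural route is to begin with the (essentially canonical) two-variable Agler decompositions of the slices $p(\lambda,\cdot,\cdot)$ and $p(\cdot,\mu,\cdot)$, understand how their single-square $z_3$-terms vary with the boundary parameters $\lambda,\mu\in\T$, and glue them analytically into a single three-variable decomposition; once that is done the linear-algebra lemma above is routine. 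I expect this gluing step, together with tracking it through the construction given in the appendix, to be the delicate point; alternatively, one may simply reinspect the explicit transfer-function realization built there and verify directly that the state-space slot attached to $z_3$ can be taken two-dimensional.
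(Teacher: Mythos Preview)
Your outline takes a genuinely different route from the paper, and the obstacle you flag in your third paragraph is exactly the point where your approach stalls while the paper's does not. You start from an already-existing Agler decomposition and try to drive down the rank of $SOS_3$ by slicing; the paper instead constructs $SOS_3$ directly as a sum of two squares from the outset, and only afterward builds $SOS_1,SOS_2$ around it.

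Concretely, writing $p(z)=a(z_1,z_2)+b(z_1,z_2)z_3$, one has $|p|^2-|\tilde p|^2=(1-|z_3|^2)\bigl(|a|^2-|b|^2\bigr)$ for $(z_1,z_2)\in\T^2$, and $|a|^2-|b|^2$ is a strictly positive trigonometric polynomial of bidegree $(1,1)$ on $\T^2$. The paper's key lemma (replacing the classical Hilbert three-squares theorem Kummert used) is that any such polynomial is a sum of \emph{two} squared moduli: one packages $t(z_1,z_2)=t_0(z_1)+t_1(z_1)z_2+\overline{t_1(z_1)z_2}$ as a $2\times 2$ positive matrix trigonometric polynomial of degree one in $z_1$ and applies the matrix Fej\'er--Riesz theorem. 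This immediately yields $SOS_3=|E|^2$ with $E\in\C^2[z_1,z_2]$. The remaining terms are then produced by a lurking-isometry argument: from the identity on $\T^2\times\C$ one obtains an explicit $3\times 3$ rational inner matrix function $V(z_1,z_2)$, and a two-variable transfer-function realization of $V$ furnishes vector polynomials $H_1,H_2$ with $SOS_j=|H_j|^2$. The four-square bound on $SOS_1,SOS_2$ then follows by the same dimensionality argument you give.

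Your closing alternative---to look inside the realization and check that the $z_3$ block can be taken two-dimensional---is close in spirit to what the paper does, but in reverse order: the paper fixes the two-dimensional $z_3$ block $E$ first (via Fej\'er--Riesz) and realizes the rest afterward. Your slicing/gluing program, by contrast, would require exactly the boundary control you say you do not yet have; the paper never needs it because the two-square structure of $SOS_3$ is built in from the start rather than recovered after the fact.
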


This is related to Theorem \ref{bounds} below and the main theme of
\cite{gK10}.  Theorem \ref{bounds} suggests we might have to use a sum
of four squares in each $SOS$ term above, but we can reduce one term
to only contain two squares.

In the case of four variables, our necessary and sufficient condition
from Theorem \ref{symAglerthm} can be significantly simplified.

\begin{theorem} \label{degree4thm}
If $p \in \C[z_1,z_2,z_3,z_4]$ is stable, multi-affine, and symmetric,
then $p$ is an Agler denominator if and only if
\[
8(|p_0|^2 - |p_4|^2) - (|p_1|^2 - |p_3|^2) \geq 2|p_2 \bar{p_1} -
\bar{p}_2 p_3 - 2(p_1 \bar{p_0} - \bar{p}_3 p_4)|
\]
where $p(z) = \sum_{\al \subset [4]} \binom{4}{|\al|}^{-1} p_{|\al|}
z^{\al}$.
\end{theorem}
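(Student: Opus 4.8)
The plan is to specialize the necessary and sufficient condition of Theorem~\ref{symAglerthm} to the case $d=4$ and then use the symmetry of $p$ to collapse the resulting positive semi-definiteness condition down to a single scalar inequality. By Theorem~\ref{symAglerthm} there is an $8\times 8$ Hermitian matrix $M=M(p)$ --- the $2^{d-1}\times 2^{d-1}$ matrix of that theorem with $d=4$ --- whose rows and columns are naturally indexed by the subsets of a three-element set, say $\{1,2,3\}$, whose entries are Hermitian combinations of the products $p_i\bar p_j$, and which satisfies: $p$ is an Agler denominator if and only if $M\succeq 0$. Since $p$ is symmetric, $M$ intertwines the permutation action of $S_3$ on $\C^8$ (permuting the labels $1,2,3$), hence commutes with it, so $M$ is block-diagonal with respect to the isotypic decomposition of this permutation module.

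Next I would record that decomposition. Grouping the subsets of $\{1,2,3\}$ by cardinality, the cardinality-$0$ and cardinality-$3$ pieces are each a copy of the trivial representation, while the cardinality-$1$ and cardinality-$2$ pieces are each the permutation representation of $S_3$ on three points, that is, trivial $\oplus$ standard. A quick character count gives $\C^8\cong 4\cdot\mathbf{1}\oplus 2\cdot\rho$, where $\mathbf{1}$ is the trivial representation and $\rho$ the two-dimensional standard representation (the sign representation does not occur). Hence $M\succeq 0$ if and only if a $4\times 4$ matrix $M_{\mathrm{triv}}$, acting on the multiplicity space of $\mathbf{1}$, and a $2\times 2$ matrix $M_{\mathrm{std}}$, acting on the multiplicity space of $\rho$, are both positive semi-definite.

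I expect $M_{\mathrm{triv}}\succeq 0$ to be automatic: under the construction of $M$ the trivial-isotypic block should amount to testing the Christoffel-Darboux decomposition on ``symmetric'' data, which collapses to the one-variable problem for the diagonal restriction $q(z):=p(z,z,z,z)=\sum_{j=0}^{4}p_j z^j$; and $q$ is stable (being the restriction of the stable polynomial $p$ to the diagonal of $\cD^4$), so the one-variable Christoffel-Darboux formula for $q$ holds by the first theorem of the introduction. Granting this, the content of Theorem~\ref{degree4thm} is exactly $M_{\mathrm{std}}\succeq 0$. To finish one computes the entries of $M_{\mathrm{std}}$: choose vectors spanning the $\rho$-multiplicity space --- for instance suitably normalized ``mean-zero'' combinations such as $e_{\{1\}}-e_{\{2\}}$ at cardinality $1$ and $e_{\{1,2\}}-e_{\{1,3\}}$ at cardinality $2$ --- compress $M$ to these two vectors, and keep careful track of the binomial normalizations $\binom{4}{|\al|}^{-1}$. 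The involution $p\mapsto\tilde p$ is an antisymmetry of the identity $|p|^2-|\tilde p|^2=\sum_j(1-|z_j|^2)SOS_j$ and interchanges the cardinality-$1$ and cardinality-$2$ copies of $\rho$, which should force the two diagonal entries of $M_{\mathrm{std}}$ to coincide, so that with a convenient normalization $M_{\mathrm{std}}=\begin{pmatrix} a & b\\ \bar b & a\end{pmatrix}$ with $2a=8(|p_0|^2-|p_4|^2)-(|p_1|^2-|p_3|^2)$ and $b=p_2\bar p_1-\bar p_2 p_3-2(p_1\bar p_0-\bar p_3 p_4)$. Then $M_{\mathrm{std}}\succeq 0$ is equivalent to $a\geq|b|$, which is exactly the asserted inequality.

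The main obstacle is the final step: pinning down the correct change of basis onto the standard-isotypic subspace and then carrying out the constant bookkeeping --- the binomial normalizations, the factor $8$, the overall scaling --- so that the compressed $2\times 2$ matrix appears in precisely the stated form. A secondary point needing care is upgrading the heuristic ``$M_{\mathrm{triv}}\succeq 0$ for free'' to a genuine argument, i.e.\ checking that the trivial-isotypic block really is governed by the (always valid) one-variable formula.
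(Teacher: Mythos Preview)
Your proposal is correct and is essentially the paper's own argument, phrased in representation-theoretic language rather than carried out by hand. The paper conjugates $\mathcal{B}$ by an explicit DFT-type matrix built from cube roots of unity to block-diagonalize it as $\mathrm{diag}(A,X,X^{t})$, where $A$ is exactly the $4\times4$ Christoffel--Darboux Gram matrix of the diagonal restriction $q(z)=p(z,z,z,z)$ (hence positive because $p$ is stable), and $X$ is the $2\times2$ block on the standard-isotypic part; this is precisely your $M_{\mathrm{triv}}$ and $M_{\mathrm{std}}$, and the automatic positivity of the trivial block and the equality of the diagonal entries of $X$ are established in the paper exactly as you anticipate, via equations \eqref{recursion2} and \eqref{symrelation}.
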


We do not know if this condition holds automatically under the
assumption of stability.  One difficulty is that both sides of the
inequality are zero for symmetrizations of degree four polynomials
with all zeros on the circle.  These would be the typical extremal
examples on which to test the inequality, for if it failed for one of
them, it would fail for a nearby stable polynomial.

We have so far been unable to find a symmetric, stable, multi-affine
polynomial that is not an Agler denominator.  In Section
\ref{examples}, we present a few additional examples to illustrate.

\section{Preliminaries}

Let us reproduce the formula Agler denominators must satisfy:
\begin{equation} \label{CDformula2}
|p(z)|^2 - |\tilde{p}(z)|^2 = \sum_{j=1}^{n} (1-|z_j|^2) SOS_j(z)
\end{equation}

To begin our study we use the following result.

\begin{theorem}[\cite{gK10}] \label{bounds} If $p \in \C[z_1,\dots,
  z_n]$ is an Agler class denominator of multi-degree $d = (d_1,\dots,
  d_n)$, then the $SOS_j(z)$ term in \eqref{CDformula2} is a sum of
  squares of polynomials of degree at most 
\[
\begin{cases} d_j -1 \text{ in } z_j &\\
d_k \text{ in } z_k  &\text{ for } k \ne j
\end{cases}
\]
 In particular, $SOS_j$ can be written as a sum of at most
 $d_j\prod_{k\ne j}(d_k+1)$ polynomials (by dimensionality).
\end{theorem}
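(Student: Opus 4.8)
The plan is to extract the degree bounds from a transfer function realization of the rational inner function $\phi = \tilde{p}/p$. First I would recall (from \cite{gK10}) that an Agler class denominator $p$ gives rise to an Agler decomposition: there is a separable Hilbert space $\mcH = \mcH_1 \oplus \dots \oplus \mcH_n$ and analytic $\mcH$-valued maps $u_j$ on the polydisk such that
\[
1 - \overline{\phi(w)}\phi(z) = \sum_{j=1}^{n} (1-\bar{w}_j z_j) \ip{u_j(z)}{u_j(w)}.
\]
Clearing the denominator $p(z)\overline{p(w)}$ turns this into
\[
p(z)\overline{p(w)} - \tilde{p}(z)\overline{\tilde{p}(w)} = \sum_{j=1}^{n} (1-\bar{w}_j z_j)\ip{v_j(z)}{v_j(w)}
\]
with $v_j(z) = p(z) u_j(z)$, and on the diagonal $w=z$ this is exactly \eqref{CDformula2} with $SOS_j(z) = \|v_j(z)\|^2$. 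So the task reduces to showing the components $v_j$ can be taken to be \emph{polynomial} vectors of the stated multidegrees.

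The key step is a degree/Fourier-support argument applied to the vector-valued identity. I would fix $j$, regard both sides as functions of $z_j$ with the other variables as parameters, and compare Laurent coefficients in $z_j$ after multiplying through by appropriate monomials. Writing $p = \sum_{k=0}^{d_j} p^{(k)} z_j^k$ and $\tilde p = \sum_{k=0}^{d_j} \widetilde{p}^{(k)} z_j^k$ (with coefficients polynomial in the remaining variables), the left side has $z_j$-degree $d_j$ in $z_j$ and $d_j$ in $\bar w_j$; the factor $(1-\bar w_j z_j)$ on the right contributes one degree, forcing $\ip{v_j(z)}{v_j(w)}$ to have $z_j$-degree at most $d_j-1$. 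Projecting onto the span of the relevant monomials (a finite-dimensional reduction), one replaces each $v_j$ by its ``polynomial part'' of degree $\le d_j-1$ in $z_j$; the point is that the discarded higher-order pieces must pair to zero on the left-hand side and hence can be removed without destroying positivity — here one uses that a positive semidefinite kernel restricted to a subspace stays positive semidefinite. For the variables $z_k$ with $k \ne j$, the same monomial-support comparison gives $z_k$-degree at most $d_k$, since neither side carries an extra factor in $z_k$. Once $v_j$ is a polynomial vector of the claimed multidegree, its entries span a space of dimension at most $d_j \prod_{k \ne j}(d_k+1)$, so $SOS_j = \|v_j\|^2$ is a sum of that many squared moduli.

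The main obstacle I anticipate is the ``truncation'' step: a priori the $v_j$ coming from the transfer function realization need not be polynomial at all, and even granting that, one must argue that projecting each $v_j$ onto the finite-dimensional space of admissible monomials preserves \emph{all} the identities simultaneously (not just the diagonal one) rather than merely giving an inequality. I expect this is handled by writing the realization in terms of a single contractive (or unitary) colligation matrix and tracking how the resolvent expansion $(I - z D)^{-1}$ interacts with the polynomial $p$ — the finiteness of the degree of $p$ forces the relevant generating-function coefficients to terminate. Controlling the behavior in the ``other'' variables $z_k$ uniformly, so that the degree-$d_k$ bound holds for every $j$ at once, is the delicate bookkeeping part; everything after that (the dimension count, and rewriting a norm-squared of a finite vector as a finite sum of squared moduli) is routine.
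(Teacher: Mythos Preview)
The paper does not prove the degree bounds in Theorem~\ref{bounds}; the result is quoted from \cite{gK10}, and only the final clause---that $SOS_j$ can be written as a sum of at most $d_j\prod_{k\ne j}(d_k+1)$ squares once the degree bounds are in hand---is explained here, in Remark~\ref{SOSremark}. Your treatment of that last step matches the paper's. There is no in-paper argument for the degree bounds themselves to compare against.

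On your sketch of the degree bounds, there is a concrete gap. You assert that comparing $z_j$-degrees on the two sides of
\[
p(z)\overline{p(w)} - \tilde p(z)\overline{\tilde p(w)} \;=\; \sum_{k=1}^{n} (1-\bar w_k z_k)\,\ip{v_k(z)}{v_k(w)}
\]
forces $\ip{v_j(z)}{v_j(w)}$ to have $z_j$-degree at most $d_j-1$. But the right-hand side is a \emph{sum} over $k$, and every term $\ip{v_k(z)}{v_k(w)}$ with $k\ne j$ also depends on $z_j$; a naive degree count on the total cannot bound the degree of a single summand without first ruling out cancellation among the terms. The obstacle you flag (passing from a Hilbert-space-valued $v_j$ to a polynomial vector) is actually downstream of this: once you know the kernel $(z,w)\mapsto\ip{v_j(z)}{v_j(w)}$ is a polynomial of the asserted multidegree in $z$ and $\bar w$, factoring its Gram matrix produces a $\C^N$-valued polynomial replacement for $v_j$ exactly as in Remark~\ref{SOSremark}. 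What is missing is a mechanism to isolate the $j$-th summand---for instance, specializing $w_k=z_k\in\T$ for $k\ne j$ to kill the other terms, or exploiting positivity of each kernel to preclude cancellation of top-order coefficients---before any degree comparison can be made.
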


\begin{remark} \label{SOSremark}
It is worth explaining the last sentence, using notation we find
convenient for the rest of the paper.  We will typically write sums of
squares terms using vector polynomials.  So,
\[
SOS(z) = \sum_{j=1}^{N} |A_j (z)|^2
\]
where the $A_j \in \C[z_1,\dots, z_n]$ will be written as 
\[
SOS(z) = |A(z)|^2
\]
where $A(z) \in \C^{N}[z_1,\dots,z_n]$ is the vector polynomial $A =
[A_1,\dots, A_N]^t$.  Now, if $V = \text{span}\{A_j: j=1,\dots, N\}$
has dimension $m$, we can always rewrite $SOS(z)$ using the square of
a $\C^m$ valued vector polynomial.  Indeed, if $B_1,\dots, B_m$ is a
basis of $V$ then there is an $N\times m$ matrix $X$ such that
\[
X B(z) = A(z)
\]
where $B = [B_1,\dots, B_m]^t$.  Then,
\[
SOS(z) = |X B(z)|^2 = B(z)^* X^* X B(z)
\]
but $X^*X$ is a $m\times m$ positive semi-definite matrix and so can
be factored as $X^*X = Y^*Y$ with $Y$ a $m\times m$ matrix.  Hence,
\[
SOS(z) = |Y B(z)|^2, 
\]
a sum of $m$ squares.
\end{remark}

Using the above conventions we can rewrite the Christoffel-Darboux
formula (Thm \ref{CDformula}) as
\begin{equation} \label{vecCD}
|p(z)|^2 - |\tilde{p}(z)|^2 = (1-|z|^2)|A(z)|^2
\end{equation}
where now $A(z) = \sum_{j} A_j z^j$ is a vector polynomial.
If $p(z)= \sum_{j} p_j z^j$, then by matching coefficients of both
sides we get
\begin{equation} \label{Schur-Cohn}
p_j\bar{p_k} - \bar{p}_{d-j} p_{d-k} = \ip{A_j}{A_k} -
\ip{A_{j-1}}{A_{k-1}}.
\end{equation}
Here $\ip{v}{w} = w^*v$ is the standard inner product of complex
euclidean space (of dimension taken from context).

It is also useful (later) to point out that $|A(z)|^2 =
|\tilde{A}(z)|^2 := |z^{d-1}|^2|A(1/\bar{z})|^2$ and therefore
\begin{equation} \label{Amatsym}
\ip{A_j}{A_k} = \ip{A_{d-1-k}}{A_{d-1-j}}.
\end{equation}

\section{Symmetric multi-affine Agler denominators}
Again refer to equation \eqref{CDformula2}.

\begin{prop} 
If $p \in \C[z_1,\dots, z_d]$ is a symmetric multi-affine Agler
denominator, then:
\begin{itemize}
\item The sums of squares term $SOS_j(z)$ does not depend on $z_j$,
  and hence is a function of $\hat{z_j}$, the $d-1$-tuple of all
  variables except $z_j$.

\item The sums of squares terms can be chosen in a canonical
  way. Namely, there is a vector polynomial $B \in
  \C^{2^{d-1}}[z_1,\dots,z_{d-1}]$, such that
\[
SOS_j(z) = |B(\hat{z_j})|^2
\] 
\item Furthermore, $|B(z_1,\dots, z_{d-1})|^2$ is symmetric in
  $z_1,\dots, z_{d-1}$, and

\item $|B(z_1,\dots,z_{d-1})|^2$ is ``$\T^{d-1}$-symmetric'', meaning
\[
|B(z_1,\dots, z_{d-1})|^2 = |z_1\cdots z_{d-1}|^2|B(1/\bar{z_1},
\dots, 1/\bar{z}_{d-1})|^2
\]

\end{itemize}
\end{prop}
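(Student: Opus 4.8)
The plan is to take any Agler decomposition \eqref{CDformula2} provided by the hypothesis, sharpen it via Theorem \ref{bounds}, and then run two averaging arguments on it --- one over permutations of the variables, one over the single reflection $z \mapsto 1/\bar z$ --- to upgrade it to a decomposition having all four listed properties. Throughout I would assign every multi-affine polynomial the ``formal'' multidegree $(1,1,\dots,1)$; a symmetric multi-affine polynomial of smaller multidegree is a constant and the statement is vacuous there. With this convention $p \mapsto \tilde p$ is an involution on multi-affine polynomials ($\tilde{\tilde p} = p$) which preserves symmetry, and $|\tilde p(z)|^2 = |z_1\cdots z_d|^2|p(1/\bar z)|^2$.

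First I would invoke Theorem \ref{bounds}. Since every $d_j = 1$, we may choose \eqref{CDformula2} so that each square summand of $SOS_j$ has degree at most $d_j - 1 = 0$ in $z_j$; hence $SOS_j$ is independent of $z_j$ and may be written $SOS_j(z) = Q_j(\hat{z_j})$ for a sum of squared moduli $Q_j$ in $d - 1$ variables. This gives the first bullet outright. It is the only use of Theorem \ref{bounds}, and it is essential: without it the permutation average below would not land back in functions of a single omitted variable.

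Next I would symmetrize. For $\sigma \in S_d$ set $\sigma z = (z_{\sigma(1)},\dots,z_{\sigma(d)})$. Since $p$ and $\tilde p$ are symmetric, the left side of \eqref{CDformula2} is invariant under $z \mapsto \sigma z$; substituting and reindexing by $k = \sigma(j)$ produces a valid decomposition whose $k$-th term is $(1 - |z_k|^2)$ times a variable-permutation of $Q_{\sigma^{-1}(k)}$ evaluated on $\hat{z_k}$. Averaging over all $\sigma \in S_d$ and grouping the permutations by the value $\sigma^{-1}(k)$, the $k$-th coefficient collapses to $\tfrac1d \sum_{m=1}^d R_m(\hat{z_k})$, where $R_m$ is the symmetrization of $Q_m$ over the $(d-1)!$ orderings of its variables. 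The key point is that this is a single fixed symmetric polynomial $S := \tfrac1d\sum_m R_m$ in $d - 1$ variables, independent of $k$; being a positive combination of variable-permutations of the sums of squared moduli $Q_m$, it is itself a sum of squared moduli, say $S(w) = |C(w)|^2$. Hence $|p|^2 - |\tilde p|^2 = \sum_k (1 - |z_k|^2)|C(\hat{z_k})|^2$ with $|C|^2$ symmetric, giving the second and third bullets up to the dimension count.

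Finally I would enforce $\T^{d-1}$-symmetry by averaging over the reflection. Replacing $z$ by $1/\bar z$ in the symmetric decomposition and multiplying by $|z_1\cdots z_d|^2$, the identities $|\tilde p(z)|^2 = |z_1\cdots z_d|^2|p(1/\bar z)|^2$ and $\tilde{\tilde p} = p$ turn the left side into $-(|p|^2 - |\tilde p|^2)$, while $|z_1\cdots z_d|^2(1 - |z_k|^{-2}) = -(1-|z_k|^2)\prod_{m\ne k}|z_m|^2$ turns the $k$-th term on the right into $-(1-|z_k|^2)|\tilde C(\hat{z_k})|^2$, with $\tilde C$ the multi-affine reflection of $C$, i.e. the vector polynomial satisfying $|\tilde C(w)|^2 = |w_1\cdots w_{d-1}|^2|C(1/\bar w)|^2$ (such a polynomial exists because $C$ is multi-affine). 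Cancelling the common sign shows $|\tilde C|^2$ also yields a valid symmetric decomposition, hence so does the average $\tfrac12(|C(w)|^2 + |\tilde C(w)|^2)$; and since a direct check gives $|w_1\cdots w_{d-1}|^2|\tilde C(1/\bar w)|^2 = |C(w)|^2$, this average is $\T^{d-1}$-symmetric, while remaining symmetric and a sum of squared moduli. All squares occurring lie in the $2^{d-1}$-dimensional space of multi-affine polynomials in $z_1,\dots,z_{d-1}$, so by Remark \ref{SOSremark} the average can be rewritten as $|B(z_1,\dots,z_{d-1})|^2$ with $B \in \C^{2^{d-1}}[z_1,\dots,z_{d-1}]$, and this $B$ satisfies all four bullets. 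I expect the only delicate part to be the bookkeeping of conjugations and reflections in this last step; there is no substantive obstacle.
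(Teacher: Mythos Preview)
Your proposal is correct and follows essentially the same approach as the paper: apply Theorem \ref{bounds} for the first bullet, average the decomposition over $S_d$ for the second and third, and then average with the reflected decomposition for the fourth, refactoring at the end via Remark \ref{SOSremark} to land in $\C^{2^{d-1}}$. The only cosmetic difference is that you organize the permutation average as a two-stage process (first symmetrize each $Q_m$ over $S_{d-1}$, then average over $m$), which makes the symmetry of the resulting $S$ immediate; the paper instead writes the full $S_d$-average at once and checks symmetry afterward by composing with permutations fixing the first slot.
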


We emphasize that there are two types of symmetry here: symmetry in
terms of permuting the variables and symmetry in terms of reflection
across the torus, which we refer to as $\T^d$-symmetry. Also, note
that $B(z)$ itself is not typically symmetric.

\begin{proof} The first item follows from Theorem
  \ref{bounds} since $p$ has multidegree $(1,1,\dots, 1)$.  For
  example, the theorem says $SOS_1(z)$ is a sum of squares of
  polynomials with multidegrees bounded by $(0,1,1, \dots,1)$.

The second item follows from taking a given sum of squares
decomposition and averaging over all permutations of the variables.

Indeed, if $S_d$ denotes the set of permutations of $[d]$, define for
each $\sigma \in S_d$, $z \in \C^d$
\[
\sigma(z) = (z_{\sigma^{-1}(1)}, z_{\sigma^{-1}(2)}, \dots,
z_{\sigma^{-1}(n)})
\]
(this puts $z_j$ into $z_{\sigma(j)}$'s slot).

By symmetry of $p$ and $\tilde{p}$, 
\begin{align}
|p(z)|^2 - |\tilde{p}(z)|^2 &= d!^{-1} \sum_{\sigma \in S_d} \sum_{j=1}^{d}
(1-|z_{\sigma^{-1}(j)}|^2) SOS_{j}(\sigma(z)) \nonumber \\
&= d!^{-1} \sum_{\sigma \in S_d} \sum_{j=1}^{d}
(1-|z_{j}|^2) SOS_{\sigma(j)}(\sigma(z)) \nonumber \\
&= \sum_{j=1}^{d} (1-|z_{j}|^2) d!^{-1} \sum_{\sigma \in S_d}
SOS_{\sigma(j)}(\sigma(z)) \label{bysym}
\end{align}
Then, by Remark \ref{SOSremark} we may write
\[
|B(\hat{z_1})|^2= d!^{-1} \sum_{\sigma \in S_d} SOS_{\sigma(1)}(\sigma(z))
\]
where $B \in \C^{2^{d-1}}[\hat{z_1}]$. This is legitimate because each
term $SOS_{\sigma(1)}(\sigma(z))$ does not depend on $z_1$ and because
the polynomials in the sums of squares decomposition span a space of
dimension at most $2^{d-1}$ (the space in question being the
polynomials of degree at most $(0,1,1,\dots, 1)$).

Let $\tau \in S_d$.  Observe that upon writing $\widehat{\tau(z)_1} =
(z_{\tau^{-1}(2)}, \dots, z_{\tau^{-1}(d)})$ (i.e. $\tau(z)$ with the
first entry deleted) we have
\[
\begin{aligned}
|B(\widehat{\tau(z)_1})|^2 &= d!^{-1} \sum_{\sigma \in S_d} SOS_{\sigma(1)}
(\sigma(\tau(z)) \\
&= d!^{-1} \sum_{\sigma \in S_d} SOS_{\sigma\tau^{-1}(1)} (\sigma(z))
\end{aligned}
\]
which is the sums of squares term in front of $(1-|z_j|^2)$ for
$j=\tau^{-1}(1)$ as in \eqref{bysym}.  This also proves
$|B(\hat{z_1})|^2$ is symmetric by considering all $\tau$ with
$\tau(1) = 1$.

If necessary we can modify $|B|^2$ to be $\T^{d-1}$-symmetric, by
reflecting our sums of squares formula:
\[
|p(z)|^2 - |\tilde{p}(z)|^2 =
\sum_{j=1}^{d}(1-|z_j|^2)|\tilde{B}(\hat{z_j})|^2
\]
where
\[
\tilde{B}(z_1,\dots,z_{d-1}) = z_1z_2\cdots z_{d-1}
\overline{B(1/\bar{z_1},\dots, 1/\bar{z}_{d-1})}.
\]
and then averaging:
\[
|p(z)|^2 - |\tilde{p}(z)|^2 =
\sum_{j=1}^{d}(1-|z_j|^2)\frac{1}{2}(|B(\hat{z_j})|^2 +
|\tilde{B}(\hat{z_j})|^2).
\]
We can then re-factor $\frac{1}{2}(|B|^2 + |\tilde{B}|^2)$ as
a sum of at most $2^{d-1}$ squares to get sums of squares terms that
are $\T^{d-1}$-symmetric.
\end{proof}

Therefore, $p$ is an Agler class denominator if and only if we can
write
\begin{equation} \label{SOS}
|p(z)|^2 - |\tilde{p}(z)|^2 = \sum_{j=1}^{d} (1-|z_j|^2)
|B(\hat{z_j})|^2
\end{equation}
where $|B(\hat{z_j})|^2$ is symmetric and $\T^{d-1}$-symmetric in
$\hat{z_j}$.

Let us examine what this implies in terms of coefficients.  Write
\[
B(z) = \sum_{\al \subset [d-1]} B_{\al} z^\al \qquad B_{\al} \in \C^{2^{d-1}}
\]
then
\[
|B(z)|^2 = \sum_{\al, \beta} \ip{B_{\al}}{B_{\beta}} z^\al
\bar{z}^{\beta}.
\]
Also, write
\[
p(z_1,\dots, z_d) = \sum_{\al \subset [d]} \binom{d}{|\al|}^{-1} p_{|\al|} z^\al.
\]

\begin{prop}
\begin{enumerate}
\item Symmetry of $|B(z)|^2$ means each $\ip{B_{\al}}{B_{\beta}}$ only
  depends on $|\al|, |\beta|, |\al\cap \beta|$. So, we may write
\[
B^{i}_{j,k} := \ip{B_{\al}}{B_{\beta}}
\]
where $j = |\al|, k = |\beta|, i = |\al \cap \beta|$.  Notice that $i$
has the following restriction:
\[
0\leq i \leq j,k, d-1.
\]
It is convenient to declare that for other configurations, including
negative values of $i,j,k$, $B^{i}_{j,k} := 0$.

\item $\T^{d-1}$-symmetry means
\begin{equation} \label{symrelation}
B^{i}_{j,k} = B^{d-1-j-k+i}_{d-1-k, d-1-j}
\end{equation}

\item Writing $|\al| = j, |\beta| = k, |\al\cap \beta| = i$, the term
  $z^{\al} \bar{z}^{\beta}$ appears with coefficient
\[
(d - j - k+i)B^{i}_{j,k} - i B^{i-1}_{j-1,k-1}
\]
in the right hand side of \eqref{SOS}.  

\end{enumerate}
\end{prop}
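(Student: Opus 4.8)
The plan is to read everything off from the two symmetries of $|B(z)|^2$ together with a monomial-by-monomial expansion of the right-hand side of \eqref{SOS}; all three items are essentially coefficient bookkeeping. For item (1), I would first note that symmetry of $|B(z)|^2$ in $z_1,\dots,z_{d-1}$ means $|B(\sigma(z))|^2 = |B(z)|^2$ for every $\sigma\in S_{d-1}$. Since $\sigma(z)^\alpha = z^{\sigma^{-1}(\alpha)}$, expanding the left side and re-indexing shows the coefficient of $z^\alpha\bar z^\beta$ equals $\ip{B_{\sigma(\alpha)}}{B_{\sigma(\beta)}}$, so matching coefficients forces $\ip{B_\alpha}{B_\beta} = \ip{B_{\sigma(\alpha)}}{B_{\sigma(\beta)}}$ for all $\sigma$. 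It then remains to observe that $S_{d-1}$ acts transitively on pairs $(\alpha,\beta)$ of a fixed type: such a pair partitions $[d-1]$ into the four blocks $\alpha\cap\beta$, $\alpha\setminus\beta$, $\beta\setminus\alpha$, $[d-1]\setminus(\alpha\cup\beta)$ of sizes $i$, $j-i$, $k-i$, $d-1-j-k+i$, all determined by $(i,j,k)$, so any $\sigma$ carrying blocks to corresponding blocks works. Hence $\ip{B_\alpha}{B_\beta}$ depends only on $(i,j,k) = (|\alpha\cap\beta|,|\alpha|,|\beta|)$, which justifies the notation $B^i_{j,k}$; the inequalities $0\le i\le\min(j,k,d-1)$ are immediate from $\alpha\cap\beta\subseteq\alpha,\beta\subseteq[d-1]$, and the vanishing for other configurations is merely a convention.

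For item (2), I would compute $|z_1\cdots z_{d-1}|^2\,|B(1/\bar z)|^2$ directly. Using $(1/\bar z)^\alpha\,\overline{(1/\bar z)^\beta} = \bar z^{-\alpha}z^{-\beta}$ and $|z_1\cdots z_{d-1}|^2 = z^{[d-1]}\bar z^{[d-1]}$, this equals $\sum_{\alpha,\beta}\ip{B_\alpha}{B_\beta}\,z^{[d-1]\setminus\beta}\bar z^{[d-1]\setminus\alpha}$. Re-indexing by $\alpha' = [d-1]\setminus\beta$, $\beta' = [d-1]\setminus\alpha$ and comparing with $|B(z)|^2$ then forces $\ip{B_\alpha}{B_\beta} = \ip{B_{[d-1]\setminus\beta}}{B_{[d-1]\setminus\alpha}}$. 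Since the new pair has sizes $d-1-k$ and $d-1-j$ with intersection of size $d-1-|\alpha\cup\beta| = d-1-j-k+i$, translating into the $B^i_{j,k}$ notation gives exactly \eqref{symrelation}.

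For item (3), I would expand $\sum_{l=1}^{d}(1-|z_l|^2)|B(\hat z_l)|^2$. By item (1), $|B(\hat z_l)|^2 = \sum_{A,C\subseteq[d]\setminus\{l\}} B^{|A\cap C|}_{|A|,|C|}\,z^A\bar z^C$ regardless of how the variables $\{z_m : m\ne l\}$ are placed in the slots of $B$. Fix $\alpha,\beta\subseteq[d]$ of type $(j,k,i)$. In $\sum_l|B(\hat z_l)|^2$ the monomial $z^\alpha\bar z^\beta$ occurs precisely when $(A,C) = (\alpha,\beta)$ and $l\notin\alpha\cup\beta$, contributing $B^i_{j,k}$ for each of the $d-j-k+i$ admissible $l$. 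In $\sum_l|z_l|^2|B(\hat z_l)|^2 = \sum_l\sum_{A,C\subseteq[d]\setminus\{l\}}B^{|A\cap C|}_{|A|,|C|}\,z^{A\cup\{l\}}\bar z^{C\cup\{l\}}$ it occurs precisely when $l\in\alpha\cap\beta$, $A = \alpha\setminus\{l\}$, $C = \beta\setminus\{l\}$, which has type $(j-1,k-1,i-1)$, contributing $B^{i-1}_{j-1,k-1}$ for each of the $i$ admissible $l$. Subtracting yields the stated coefficient $(d-j-k+i)B^i_{j,k} - i B^{i-1}_{j-1,k-1}$.

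The only step that is not pure bookkeeping is the transitivity claim underlying item (1), together with keeping the permutation and conjugation conventions straight throughout; everything else follows by matching coefficients. One minor caveat worth flagging in passing: the displayed range $0\le i\le j,k,d-1$ is the obvious necessary condition rather than the sharp realizability condition $\max(0,j+k-d+1)\le i\le\min(j,k)$, but since $B^i_{j,k}$ is only ever evaluated on genuine subsets of $[d-1]$ or $[d]$, this discrepancy is harmless.
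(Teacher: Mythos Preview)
Your argument is correct and follows essentially the same route as the paper: item~(1) is exactly the ``straightforward'' transitivity observation the paper gestures at, item~(2) is the same complement-and-match-coefficients computation, and item~(3) is the same bookkeeping of which index $l$ contributes from the $1$-term versus the $|z_l|^2$-term. Your write-up is simply more detailed than the paper's; the added remark on the sharp realizability range for $i$ is accurate but, as you note, irrelevant to how the $B^{i}_{j,k}$ are actually used.
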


\begin{proof}
(1) This is straightforward.

(2) This follows from
\[
\begin{aligned}
|B(z)|^2 &= |\tilde{B}(z)|^2 \\
&= \sum_{\al,\beta} \ip{B_{\beta}}{B_{\al}}
z^{[d-1]-\al} \bar{z}^{[d-1]-\beta} \\ &= \sum_{\al, \beta}
\ip{B_{[d-1]-\beta}}{B_{[d-1]-\al}} z^\al \bar{z}^{\beta}.
\end{aligned}
\]

(3) Looking at the right hand side of \eqref{SOS}, we pick up a copy
of $B^{i}_{j,k}$ for every $r \in \al^c\cap\beta^c$, where we use
$\al^c$ to denote the complement of $\al \subset [d]$ and note that
$|\al^c\cap \beta^c| = d-j-k+i$.  Finally, we pick up a copy of
$-B^{i-1}_{j-1,k-1}$ for every $r \in \al\cap \beta$.
\end{proof}

Equating coefficients on both sides of \eqref{SOS} we get
\begin{equation} \label{recursion}
\binom{d}{j}^{-1} \binom{d}{k}^{-1}(p_{j}\overline{p_{k}} -
\overline{p_{d-j}} p_{d-k}) = (d - j - k+i)B^{i}_{j,k} - i
B^{i-1}_{j-1,k-1}
\end{equation}
which holds independently of $i$.

The point now is that all values of $B^{i}_{j,k}$ can be solved for
explicitly in terms of the coefficients of $p$.  This is clear since
the restrictions on $i$ (in the above proposition) force $d-j-k+i$ to
be nonzero, in which case $B^{i}_{j,k}$ is expressed in terms of
$B^{i-1}_{j-1,k-1}$ and coefficients of $p$.  One can even write down
a complicated formula.  This gives a concrete necessary and sufficient
condition for $p$ to be an Agler class denominator.

\begin{theorem} \label{symAglerthm} A stable multi-affine symmetric
  polynomial $p \in \C[z_1,\dots, z_d]$
\[
p(z) = \sum_{\al \subset [d]} \binom{d}{|\al|}^{-1} p_{|\al|} z^\al
\]
 is an Agler class denominator if and only if the numbers
 $B^{i}_{j,k}$ which can be solved from \eqref{recursion} have the
 property that the $2^{d-1}\times 2^{d-1}$ matrix (indexed by subsets
 of $[d-1]$)
\[
\mathcal{B} := \left( B^{|\al\cap \beta|}_{|\al|, |\beta|}
\right)_{\al, \beta \subset [d-1]}
\]
is positive semi-definite.
\end{theorem}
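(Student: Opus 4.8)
The plan is to note that the two propositions of this section have already reduced ``$p$ is an Agler denominator'' to a concrete linear-algebraic condition, and that the theorem is essentially a repackaging of that condition through Gram matrices. I would show that each of the two statements in the theorem is equivalent to the existence of a vector polynomial $B = \sum_{\al \subset [d-1]} B_\al z^\al$ with $B_\al \in \C^{2^{d-1}}$ such that $|B(z)|^2$ is symmetric in $z_1, \dots, z_{d-1}$ and the identity \eqref{SOS} holds; the equivalence of the latter with ``$p$ is an Agler denominator'' is exactly what was recorded around \eqref{SOS}. The glue is the already-noted fact that \eqref{recursion}, together with the out-of-range convention, determines every $B^i_{j,k}$ uniquely from the coefficients of $p$: the coefficient $d-j-k+i = d - |\al\cup\beta|$ is positive for every triple $(i,j,k)$ arising from $\al,\beta\subset[d-1]$, so one solves recursively, descending to the base case $i = 0$.

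I would first prove the forward implication. Assuming $p$ is an Agler denominator, choose $B \in \C^{2^{d-1}}[z_1,\dots,z_{d-1}]$ with $|B(z)|^2$ symmetric so that \eqref{SOS} holds with $SOS_j(z) = |B(\hat{z_j})|^2$. Symmetry of $|B(z)|^2$ forces $\ip{B_\al}{B_\beta}$ to depend only on $(|\al|, |\beta|, |\al\cap\beta|)$. Equating the coefficient of $z^\al\bar z^\beta$ on the two sides of \eqref{SOS} --- which, as computed above, is $(d-j-k+i)B^i_{j,k} - i B^{i-1}_{j-1,k-1}$ on the right and $\binom{d}{j}^{-1}\binom{d}{k}^{-1}(p_j \overline{p_k} - \overline{p_{d-j}} p_{d-k})$ on the left, with $(j,k,i) = (|\al|,|\beta|,|\al\cap\beta|)$ --- shows these numbers satisfy \eqref{recursion}, hence equal the $B^i_{j,k}$ of the theorem by uniqueness. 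Therefore $\mathcal{B} = (\ip{B_\al}{B_\beta})_{\al,\beta\subset[d-1]}$ is the Gram matrix of $\{B_\al : \al\subset[d-1]\}$ and is positive semidefinite.

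For the converse, suppose $\mathcal{B}\succeq 0$ and factor $\mathcal{B} = V^*V$ with $V$ a $2^{d-1}\times 2^{d-1}$ matrix; let $B_\al\in\C^{2^{d-1}}$ be the column of $V$ indexed by $\al\subset[d-1]$, so $\ip{B_\al}{B_\beta} = B^{|\al\cap\beta|}_{|\al|,|\beta|}$. Put $B(z) = \sum_{\al\subset[d-1]}B_\al z^\al$; then $|B(z)|^2$ is symmetric in $z_1,\dots,z_{d-1}$, and forming $\sum_{j=1}^d (1 - |z_j|^2)|B(\hat{z_j})|^2$ and comparing the coefficient of $z^\al\bar z^\beta$ with the corresponding coefficient of $|p|^2 - |\tilde{p}|^2$ is, by the same two coefficient computations, exactly \eqref{recursion} --- which holds since the $B^i_{j,k}$ were defined to solve it. Hence \eqref{SOS} is an identity of polynomials with each $SOS_j(z) = |B(\hat{z_j})|^2$ a sum of at most $2^{d-1}$ squared moduli, so \eqref{CDformula2} holds and $p$ is an Agler denominator. (Using $\binom{d}{j}^{-1}\binom{d}{k}^{-1}(p_j\overline{p_k} - \overline{p_{d-j}}p_{d-k}) = -\binom{d}{k}^{-1}\binom{d}{j}^{-1}(p_{d-k}\overline{p_{d-j}} - \overline{p_k}p_j)$ and the same uniqueness, one also checks the solved $B^i_{j,k}$ satisfy \eqref{symrelation}, so the constructed decomposition is automatically $\T^{d-1}$-symmetric, consistent with the proposition; this is not needed for the statement.)

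The routine parts are the two coefficient comparisons and the elementary fact that a Hermitian matrix is positive semidefinite if and only if it is the Gram matrix of some collection of vectors. I expect the only point needing care to be bookkeeping: checking that the index set of $\mathcal{B}$, namely subsets of $[d-1]$ --- equivalently a monomial basis of the polynomials of multidegree at most $(1,\dots,1)$ in $z_1,\dots,z_{d-1}$, of size $2^{d-1}$, matching the bound $d_j\prod_{k\ne j}(d_k+1)=2^{d-1}$ from Theorem \ref{bounds} --- matches precisely the range of triples $(i,j,k)$ on which \eqref{recursion} is solvable, so that out-of-range values of $B^i_{j,k}$ never enter $\mathcal{B}$. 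The genuinely substantive reduction --- in particular the averaging argument producing a single symmetric $B$ serving all $d$ coordinate slots --- has already been carried out, so, granting it, the theorem is essentially a restatement of the preceding discussion.
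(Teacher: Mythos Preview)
Your proposal is correct and follows essentially the same route as the paper: the forward direction is the preceding discussion (the Gram matrix of the $B_\al$ must be $\mathcal{B}$ by uniqueness of the recursion), and the converse is the observation that $\mathcal{B}\succeq 0$ lets one factor $\sum_{\al,\beta} B^{|\al\cap\beta|}_{|\al|,|\beta|} z^\al\bar z^\beta$ as $|B(z)|^2$ and then reverse the coefficient computation. You have simply written out in detail what the paper compresses into two sentences.
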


\begin{proof}
The ``only if'' direction follows from the preceding discussion.  The
``if'' direction essentially follows from reversing all of the
arguments and observing that if the given matrix is positive
semi-definite then
\[
\sum_{\al,\beta \subset [d-1]} B^{|\al\cap\beta|}_{|\al|, |\beta|}
z^{\al} \bar{z}^{\beta}
\]
can be factored as $|B(z)|^2$.  
\end{proof}

Theorem \ref{thm1} follows from this.

\begin{proof}[Proof of Theorem \ref{thm1}]
We are assuming $p$ is a symmetric, multi-affine polynomial, and we
may assume $p(0)=1$.  For each $r$, set $p_r(z) := p(rz)$ construct
the matrix $\mathcal{B}(r)$ as above.  This matrix depends
continuously on $r$ and is positive definite when $r=0$. Therefore,
the matrix stays positive definite for $r$ in some interval containing
$0$.  By the previous theorem, for such $r$, $p_r$ is an Agler class
denominator. 
\end{proof}

\begin{remark}
Let us explicitly give the matrix $\mathcal{B}(0)$ from the proof
because even in this trivial case it is useful to see the sums of
squares decomposition.

Our ``polynomial'' is $p(z) = 1$ which we view as a multi-affine
polynomial of $d$ variables. So, $\tilde{p}(z) = z_1\cdots z_d$.
Solving the recurrence we get
\[
\begin{aligned}
B^{i}_{j,k} &= 0 \text{ if } j,k,i \text{ are not all equal}\\
B^{j}_{j,j} &= \frac{1}{d\binom{d-1}{j}}.\\
\end{aligned}
\]
Then, $\mathcal{B}(0)$ is diagonal and clearly
positive definite, and we get
\[
|B(z)|^2 = \sum_{\al \subset [d-1]} \frac{|z^\al|^2}{d\binom{d-1}{|\al|}}
\]
and hence
\[
1- |z_1\dots z_d|^2 = \sum_{j=1}^{d} (1-|z_j|^2) \sum_{\al \subset
  [d]\setminus \{j\}} \frac{|z^\al|^2}{d\binom{d-1}{|\al|}}
\]
\end{remark}

It turns out to be useful to apply the Christoffel-Darboux formula to 
\[
p(z,z,\dots,z) = \sum_{j=0}^{d} p_j z^j
\]
(recall that we have weighted our multi-affine polynomial's
coefficients to make this formula hold) and combine this with Theorem
\ref{symAglerthm}.  Combining formula \eqref{Schur-Cohn} with
\eqref{recursion} we get
\begin{equation} \label{recursion2}
\binom{d}{j}^{-1} \binom{d}{k}^{-1}(\ip{A_j}{A_k} -
\ip{A_{j-1}}{A_{k-1}}) = (d - j - k+i)B^{i}_{j,k} - i
B^{i-1}_{j-1,k-1}.
\end{equation}

The nice thing about this is that $\mathcal{B}$ is now expressed in
terms of the matrix $\ip{A_j}{A_k}$, which we know to be positive
semi-definite (in fact, \emph{positive} when $p$ is stable).

\section{Degree 4 case}
We investigate the degree 4 situation and prove Theorem
\ref{degree4thm}.  Let
\[
p(z_1,z_2,z_3,z_4) = \sum_{\al \subset\{1,2,3,4\}} \binom{4}{|\al|}^{-1}
p_{|\al|} z^\al
\]
which we assume to be stable.  Solving for $\mathcal{B}$ from Theorem
\ref{symAglerthm} in terms of the matrix $A_{j,k} = \ip{A_j}{A_k}$ as
in \eqref{recursion2} we get
\[
\begin{aligned}
B^{0}_{0,0} &= \frac{1}{4} A_{0,0}, & B^{1}_{1,1} & = \frac{1}{4^2}
A_{0,0} + \frac{1}{3\cdot 4^2} A_{1,1} \\ 
B^{0}_{1,0} &= \frac{1}{12}
A_{1,0}, &  B^{0}_{2,0} &= \frac{1}{12} A_{2,0} \\ 
B^{0}_{3,0} &=
\frac{1}{4} A_{3,0}, &  B^{0}_{1,1} &= \frac{1}{2\cdot 4^2} (A_{1,1} -
A_{0,0}) \\ 
B^{0}_{2,1} &= \frac{1}{6\cdot 4}(A_{2,1} - A_{1,0}), &
B^{1}_{2,1} &= \frac{1}{2\cdot 6 \cdot 4}( A_{2,1} + A_{1,0}) \\
 B^{1}_{3,1} &= \frac{1}{12} A_{2,0} & & \\
\end{aligned}
\]
The remaining values follow from the relation
\[
B^{i}_{j,k} = B^{3-j-k+i}_{3-k,3-j}.
\]
(It is also useful to recall equation \eqref{Amatsym}.)

Recall the $2^{4-1}\times 2^{4-1}$ matrix $\mathcal{B}$ is indexed by
subsets of $[3] = \{1,2,3\}$.  We will index according to the
ordering:
\[
\{ \varnothing, \{1\}, \{2\}, \{3\}, \{1,2\}, \{2,3\}, \{1,3\},
\{1,2,3\} \}
\]
It is convenient to break up $\mathcal{B}$ into blocks according to
the size of subset and factor out a $\frac{1}{4}$:

\[
\mathcal{B} = \frac{1}{4} \begin{bmatrix} S_{0,0} & S_{0,1} & S_{0,2} & S_{0,3}
  \\ S_{1,0} & S_{1,1} & S_{1,2} & S_{1,3} \\ S_{2,0} & S_{2,1} &
  S_{2,2} & S_{2,3} \\ S_{3,0} & S_{3,1} & S_{3,2} &
  S_{3,3} \end{bmatrix}
\]
So, for example $S_{2,1}$ is a $3\times 3$ matrix with rows indexed by
$\{\{1,2\}, \{2,3\}, \{1,3\}\}$ and columns indexed by
$\{\{1\},\{2\},\{3\}\}$.

Each block is now explicitly described.
\[
S_{0,0} = S_{3,3} = A_{0,0}
\]
\[
S_{0,1} = S_{1,0}^* = S_{2,3}^t = \overline{S_{3,2}} = \frac{1}{3}
A_{0,1} \begin{bmatrix} 1 & 1 & 1 \end{bmatrix}
\]
\[
S_{0,2} = S_{2,0}^* = S_{1,3}^t = \overline{S_{3,1}} = \frac{1}{3}
A_{0,2} \begin{bmatrix} 1 & 1 & 1 \end{bmatrix}
\]
\[
S_{0,3} =S_{3,0}^* = A_{0,3}
\]
\[
S_{1,1} = \begin{bmatrix} \frac{1}{4}A_{0,0} + \frac{1}{12}A_{1,1} &
  \frac{1}{8} (A_{1,1} - A_{0,0}) & \frac{1}{8} (A_{1,1} - A_{0,0})
  \\ \frac{1}{8} (A_{1,1} - A_{0,0}) & \frac{1}{4}A_{0,0} +
  \frac{1}{12}A_{1,1} & \frac{1}{8} (A_{1,1} - A_{0,0}) \\ \frac{1}{8}
  (A_{1,1} - A_{0,0}) & \frac{1}{8} (A_{1,1} - A_{0,0}) &
  \frac{1}{4}A_{0,0} + \frac{1}{12}A_{1,1}
\end{bmatrix}
\]
\[
S_{1,2} = S_{2,1}^* = \begin{bmatrix} \frac{1}{12}(A_{1,2} + A_{0,1}) &
  \frac{1}{6}(A_{1,2} - A_{0,1}) & \frac{1}{12}(A_{1,2} + A_{0,1})
  \\ \frac{1}{12}(A_{1,2} + A_{0,1}) & \frac{1}{12}(A_{1,2} + A_{0,1})
  & \frac{1}{6}(A_{1,2} - A_{0,1}) \\ \frac{1}{6}(A_{1,2} - A_{0,1}) &
  \frac{1}{12}(A_{1,2} + A_{0,1}) & \frac{1}{12}(A_{1,2} + A_{0,1})
\end{bmatrix}
\]
\[
S_{2,2} = S_{1,1}
\]
(one must be careful in the last equality because the entries are
indexed differently---$S_{1,1}$ is indexed by $\{ \{1\},\{2\},
\{3\}\}$ and $S_{2,2}$ is indexed by $\{\{1,2\}, \{2,3\}, \{1,3\}\}$).

This matrix, while complicated, has lots of symmetry, which we exploit
by conjugating by the following circulant type matrix
\[
R = 2 \begin{bmatrix} 1 & 0 & 0 & 0 \\
0 & C & 0 & 0 \\
0 & 0 & C & 0 \\
0 & 0 & 0 & 1
\end{bmatrix}
\]
where
\[
C = \begin{bmatrix} 1 & 1 & 1 \\
1 & \mu & \mu^2 \\
1 & \mu^2 & \mu \\
\end{bmatrix}
\]
and $\mu = e^{i2\pi/3}$.

To compute $R \mathcal{B} R^*$ we observe that
\[
C S_{1,0} = \begin{bmatrix} A_{1,0} \\ 0 \\ 0 \end{bmatrix}
\]
\[
C S_{1,1} C^* = \begin{bmatrix} A_{1,1} & 0 & 0 \\
0 & \frac{1}{8}(9A_{0,0} - A_{1,1}) & 0 \\
0 & 0 & \frac{1}{8}(9A_{0,0} - A_{1,1}) \end{bmatrix}
\]
\[
C S_{1,2} C^* = \begin{bmatrix} A_{1,2} & 0 & 0 \\
0 & \frac{1}{4}\mu^2(A_{1,2} - 3A_{0,1}) & 0 \\
0 & 0 &  \frac{1}{4}\mu (A_{1,2} - 3A_{0,1})
\end{bmatrix}
\]
The matrix $\mathcal{B}$ is positive semi-definite if and only if
$R\mathcal{B} R^*$ is, and after permuting index sets around
$R\mathcal{B}R^*$ is positive semi-definite if and only if the
following block matrix is
\[
\begin{bmatrix} A & 0 & 0 \\ 0 & X & 0 \\ 0
  & 0 & X^t \end{bmatrix}
\]
where 
\[
X  = \frac{1}{4} \begin{bmatrix} \frac{1}{2}(9A_{0,0} - A_{1,1}) &
  \mu(A_{2,1} - 3A_{1,0}) \\
  \mu^2(A_{1,2} - 3A_{0,1}) & \frac{1}{2}(9A_{0,0} -
  A_{1,1}) \end{bmatrix}.
\]
Since $A$ is positive, we only need $X$ positive semi-definite and
this amounts to the following inequality
\[
9A_{0,0} - A_{1,1} \geq 2|A_{2,1} - 3A_{1,0}|.
\]

If we translate this into coefficients of $p$ via \eqref{Schur-Cohn}
we get the inequality
\[
8(|p_0|^2 - |p_4|^2) - (|p_1|^2 - |p_3|^2) \geq 2|p_2 \bar{p_1} -
\bar{p}_2 p_3 - 2(p_1 \bar{p_0} - \bar{p}_3 p_4)|
\]
This proves Theorem \ref{degree4thm}.  

\section{Examples} \label{examples}
We have been unable to locate a stable multi-affine symmetric
polynomial which is not an Agler denominator.  Let us present some of
the simplest possible examples.  Consider $q(z) = 1-z$ which we can
symmetrize at any degree we like:
\[
\begin{aligned}
p_3(z_1,z_2,z_3) &= 1 - \frac{1}{3}\sum_{j=1}^{3}z_j \\
p_4(z_1,\dots,z_4) &= 1 - \frac{1}{4} \sum_{j=1}^{4} z_j \\
\dots \text{etc.}
\end{aligned}
\]
Note $q$ is not ``strictly'' stable, but this is unimportant for what we are
talking about---we really care about the existence of sums of squares
decompositions as in the definition of Agler denominators and are not
so worried about zeros on the boundary of the polydisk.

Theorem \ref{kummert} implies $p_3$ is an Agler denominator, Theorem
\ref{degree4thm} implies $p_4$ is an Agler denominator, and Theorem
\ref{symAglerthm} implies $p_5, \dots, p_{11}$ are Agler denominators
after lengthy computations (which we necessarily performed with a
computer since the computation for $p_{11}$ involves checking whether
a $2^{10}\times 2^{10}$ matrix is positive semi-definite).

So, for $d=3,\dots, 11$, all of the following rational inner functions
\[
\frac{d\prod_{j=1}^{d} z_j - \sum_{k=1}^{d} \prod_{j \ne k} z_j}{d -
  \sum_{j=1}^{d} z_j}
\]
satisfy the von Neumann inequality \eqref{vN}.

\section{Appendix: three variable multi-affine stable polynomials}

Here we give a proof of the following result due to Kummert and our
sharpening (Theorem \ref{sharpkummert}).

\begin{theorem}[\cite{aK89b}] \label{kummert} 
If $p\in \C[z_1,z_2,z_3]$ is multi-affine and stable, then $p$ is an
Agler denominator.
\end{theorem}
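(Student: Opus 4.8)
The plan is to produce the Christoffel-Darboux type decomposition \eqref{CDformula2} for an arbitrary multi-affine stable $p \in \C[z_1,z_2,z_3]$ directly, by peeling off the variables one at a time. First I would fix $z_3 \in \D$ and regard $p(\cdot,\cdot,z_3)$ as a two-variable stable polynomial in $(z_1,z_2)$; by the two-variable Christoffel-Darboux formula already quoted in the introduction (see \cite{CW99}, \cite{GW04}, \cite{gK08a}), there is a decomposition
\[
|p(z)|^2 - |\tilde{p}(z)|^2 = (1-|z_1|^2) SOS_1(z) + (1-|z_2|^2) SOS_2(z) + (1-|z_3|^2) SOS_3(z),
\]
but the catch is that applying the two-variable result fiberwise gives coefficients depending on $z_3$ in an a priori uncontrolled (merely measurable or rational) way, not polynomially. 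So the real work is to arrange this so that the $SOS_j$ are genuine polynomial sums of squares in all three variables simultaneously. The cleaner route, and the one I would actually pursue, is the operator-theoretic/transfer-function approach: since $p$ has multidegree $(1,1,1)$, the inner function $\phi = \tilde{p}/p$ has a finite-dimensional Agler-type model, and I would try to build a unitary colligation
\[
U = \begin{bmatrix} A & B \\ C & D \end{bmatrix} : \C^{d_1}\oplus\C^{d_2}\oplus\C^{d_3}\oplus\C \to \C^{d_1}\oplus\C^{d_2}\oplus\C^{d_3}\oplus\C
\]
realizing $\phi(z) = D + C(I - Z_E A)^{-1} Z_E B$ with $Z_E = z_1 P_1 + z_2 P_2 + z_3 P_3$, where $P_j$ projects onto the $j$-th summand. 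Existence of such a unitary realization is equivalent to \eqref{vN} and hence (for $\phi$ rational inner) to \eqref{CDformula2}, so this is exactly what must be produced.

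The key steps, in order, would be: (1) normalize $p$ so that $p(0)\neq 0$ and read off from the coefficients of $p$ and $\tilde p$ the one-variable data along each axis — i.e. for each $j$, writing $p(0,\dots,z_j,\dots,0)$ and the corresponding slice, one gets a Schur-Cohn positive matrix $(\ip{A^{(j)}_k}{A^{(j)}_\ell})$ from the one-variable Christoffel-Darboux formula, providing the "would-be" $B$-blocks $C_j, B_j$ of the colligation. (2) Assemble the off-diagonal cross terms $A$ of the colligation using the genuinely three-variable (mixed) coefficients of $p$, i.e. the coefficients of monomials like $z_1 z_2$, $z_1 z_3$, etc.; this is the place where being in three variables (as opposed to four or more) is essential, because the system of equations matching $p$'s coefficients to the entries of $U$ is exactly determined / solvable. (3) Verify that the resulting $U$ can be chosen unitary — equivalently, that a certain Gram matrix built from the coefficients of $p$ is positive semidefinite — and here is where stability of $p$ enters, guaranteeing the relevant matrix is in fact positive. (4) Expand $(I - Z_E A)^{-1}$ to extract $SOS_1, SOS_2, SOS_3$ explicitly as $|C_j(I-Z_E A)^{-1} \text{(rest)}|^2$-type expressions, and verify that $SOS_3$ can be taken to be a sum of two squares while $SOS_1, SOS_2$ need at most four, which gives the sharpening in Theorem \ref{sharpkummert}; by Theorem \ref{bounds} the dimension counts are $d_j \prod_{k\neq j}(d_k+1) = 1\cdot 2 \cdot 2 = 4$, and the improvement for one term comes from a symmetry or rank argument on the corresponding block.

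The main obstacle I expect is step (3): showing that the overdetermined-looking moment problem — "find a unitary $U$ whose transfer function is exactly $\tilde p/p$" — is actually solvable for every multi-affine stable $p$ in three variables. In two variables this is classical (Agler's theorem / And\^o's inequality), and in four or more it fails, so three variables is the delicate borderline case; the proof must use the multi-affine structure to reduce the unitary-completion problem to a tractable one, likely by a clever choice of coordinates that makes the required Gram matrix block-triangular or circulant-like (paralleling the $R$-conjugation trick used in the degree-4 symmetric case in Section 4). A secondary difficulty is bookkeeping the exact number of squares in each $SOS_j$ to get the "$2,4,4$" split rather than the naive "$4,4,4$" — this requires identifying a redundancy in one of the blocks, presumably because the $z_3$-slice polynomial has degree one and its Christoffel-Darboux sum is a single square, so the $z_3$-direction contributes less than a dimension count would suggest.
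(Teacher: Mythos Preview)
Your proposal has the right overall destination (a unitary transfer-function realization of $\tilde p/p$) but a genuine gap at the point you yourself flag as the main obstacle: step (3). You propose to assemble a three-variable unitary colligation directly from the coefficients of $p$ and then verify unitarity by showing some Gram matrix is positive semidefinite, ``likely by a clever choice of coordinates that makes the required Gram matrix block-triangular or circulant-like.'' That circulant trick is specific to the \emph{symmetric} case of Section~4 (it diagonalizes the $S_{d-1}$-action on $\mathcal{B}$) and there is no reason to expect anything like it for a general, non-symmetric multi-affine $p$. Without a concrete mechanism here, steps (1)--(2) only set up an underdetermined/overdetermined coefficient-matching problem, and step (3) is where the proof actually lives.

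The paper's proof avoids this obstacle by \emph{not} building a three-variable colligation directly. Instead it singles out one variable, writing $p(z)=a(z_1,z_2)+b(z_1,z_2)z_3$, and observes that on $\T^2$
\[
|p|^2-|\tilde p|^2=(1-|z_3|^2)\bigl(|a(z_1,z_2)|^2-|b(z_1,z_2)|^2\bigr).
\]
The quantity $|a|^2-|b|^2$ is a strictly positive bidegree-$(1,1)$ trig polynomial on $\T^2$, and a short matrix Fej\'er--Riesz lemma (proved in the appendix) factors it as $|E(z_1,z_2)|^2$ with $E$ taking values in $\C^2$; this is exactly what gives $SOS_3$ as a sum of \emph{two} squares. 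One then runs a lurking-isometry argument to get, for each $(z_1,z_2)\in\T^2$, an isometry $V(z_1,z_2)$ on a two-dimensional subspace of $\C^3$; the nontrivial step is extending $V$ to a full $3\times 3$ matrix that is \emph{rational} in $(z_1,z_2)$ and unitary on $\T^2$, which Kummert does by an explicit formula involving $a,b,\tilde a,\tilde b,E,\tilde E$. At that point $V$ is a two-variable rational matrix inner function, and one invokes the known two-variable realization theorem (Kummert; see also Ball--Sadosky--Vinnikov) to obtain a unitary $U$ and auxiliary vector functions $G_1,G_2$. Composing this realization with the $z_3$-direction data yields the full decomposition, with $SOS_1,SOS_2$ arising as $|H_j|^2=|G_jY|^2$ and reduced to four squares each by the dimension count of Theorem~\ref{bounds}. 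The essential idea you are missing is this reduction to a \emph{two-variable, matrix-valued} inner function, where realization is already known, rather than attempting a direct three-variable unitary completion.
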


The proof we give is essentially Kummert's, although we have made it
less computational and have removed the use of a classical theorem of
Hilbert (viz. positive two variable degree 2 real polynomials are sums
of three squares) to prove our sharpening.

\begin{lemma} Let $t(z_1,z_2)$ be a positive trig polynomial of
  degree one in each variable.  Then, $t$ is the sum of squared moduli
  of two polynomials.
\end{lemma}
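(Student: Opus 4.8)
The plan is to reduce the problem to a one‑variable \emph{matrix}-valued Fej\'er--Riesz factorization by ``vectorizing'' in the variable $z_2$. Since $t$ has degree one in $z_2$, I would first write, for $z_1 \in \T$,
\[
t(z_1,z_2) = t_0(z_1) + c(z_1) z_2 + \overline{c(z_1)}\,\bar z_2 ,
\]
where $t_0$ is the $z_2^0$-coefficient of $t$ --- a \emph{real} trigonometric polynomial of degree at most one in $z_1$, since $t$ is real --- and $c$ is the $z_2^1$-coefficient, a trigonometric polynomial of degree at most one in $z_1$. On $\T^2$ this is exactly
\[
t(z_1,z_2) = \begin{pmatrix} 1 & \bar z_2 \end{pmatrix} M(z_1) \begin{pmatrix} 1 \\ z_2 \end{pmatrix},
\qquad
M(z_1) := \begin{pmatrix} \tfrac12 t_0(z_1) & c(z_1) \\ \overline{c(z_1)} & \tfrac12 t_0(z_1) \end{pmatrix},
\]
a $2\times 2$ Hermitian matrix whose entries are trigonometric polynomials of degree at most one in $z_1$.

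The second step is to observe that positivity of $t$ on $\T^2$ is \emph{equivalent} to $M(z_1)\succeq 0$ for every $z_1\in\T$. Indeed, for fixed $z_1$ the minimum of $t(z_1,\cdot)$ over $\T$ is $t_0(z_1) - 2|c(z_1)|$, while the eigenvalues of $M(z_1)$ are $\tfrac12 t_0(z_1) \pm |c(z_1)|$; both conditions say precisely $t_0(z_1)\ge 2|c(z_1)|$ for all $z_1\in\T$. So $t\ge 0$ on $\T^2$ gives a $2\times 2$ Hermitian matrix trigonometric polynomial $M(z_1)$, of degree at most one in $z_1$, that is positive semidefinite on $\T$.

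The third step is to apply the one-variable matrix-valued Fej\'er--Riesz theorem: such an $M$ factors as $M(z_1) = N(z_1)^* N(z_1)$ on $\T$, with $N(z_1) = N_0 + N_1 z_1$ an \emph{analytic} $2\times 2$ matrix polynomial of degree at most one. Then
\[
t(z_1,z_2) = \left| N(z_1)\begin{pmatrix} 1 \\ z_2 \end{pmatrix}\right|^2 = |f(z_1,z_2)|^2 + |g(z_1,z_2)|^2 ,
\]
where $f,g$ are the two entries of the vector $N(z_1)(1,z_2)^{t}$; each is an analytic polynomial of degree at most one in each variable, which is what is claimed (and even matches the degree bounds of Theorem \ref{bounds}).

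The main obstacle is the matrix Fej\'er--Riesz step; everything else is bookkeeping. One can simply cite the classical operator-valued Fej\'er--Riesz theorem (Rosenblum--Rovnyak, with roots in Wiener--Masani), which produces the factorization with the stated degree bound even when $M(z_1)$ is singular at some points of $\T$ --- the case where $t$ vanishes on $\T^2$. Alternatively, since we are only in the $2\times 2$, degree-one case, I would do the factorization by hand: writing $M(z_1) = M_{-1}\bar z_1 + M_0 + M_1 z_1$ with $M_{-1} = M_1^{*}$, one needs $N_0,N_1$ with $N_0^{*}N_0 + N_1^{*}N_1 = M_0$ and $N_0^{*}N_1 = M_1$, which can be solved by a single completion-of-squares (Schur-complement) step, using that $\det M(z_1) = \tfrac14 t_0(z_1)^2 - |c(z_1)|^2 \ge 0$ on $\T$ is a scalar trigonometric polynomial of degree at most two and hence a squared modulus by the scalar Fej\'er--Riesz theorem.
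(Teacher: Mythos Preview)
Your proof is correct and follows essentially the same route as the paper: write $t$ as a quadratic form in $(1,z_2)^t$ with a $2\times2$ matrix trigonometric polynomial in $z_1$, observe that positivity of $t$ on $\T^2$ forces this matrix to be positive (semi)definite on $\T$, and then apply the matrix Fej\'er--Riesz theorem to obtain the two-term sum of squares. The only minor differences are cosmetic (your $c$, $M$, $N$ are the paper's $t_1$, $T$, $A$) and that you handle the nonnegative case while the paper uses strict positivity; your additional remarks on doing the $2\times2$ factorization by hand are a nice supplement but not needed for the argument.
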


\begin{proof}
Write $t(z_1,z_2) = t_0(z_1) + t_1(z_1)z_2 + \overline{t_1(z_1)z_2}$.
Positivity implies $t_0(z_1) > 2|t_1(z_1)|$ for all $z_1 \in \T$ after
minimizing over $z_2$.  Then, the matrix
\[
T(z_1) = \begin{bmatrix} \frac{1}{2} t_0(z_1) & t_1(z_1) \\ \overline{t_1(z_1)}
  & \frac{1}{2} t_0(z_1) \end{bmatrix}
\]
is a positive matrix trig polynomial of degree one in $z_1$.  By the
matrix Fej\'er-Riesz theorem, it can be factored as $A(z_1)^* A(z_1)$
where $A(z_1)$ is a degree one $2\times 2$ matrix polynomial.  Then,
\[
t(z_1,z_2) = \begin{bmatrix} 1 & \bar{z}_2 \end{bmatrix}
T(z_1) \begin{bmatrix} 1 \\ z_2 \end{bmatrix} = \left|
A(z_1)\begin{bmatrix} 1 \\ z_2 \end{bmatrix} \right|^2
\]
which is a sum of two squares.
\end{proof}

\begin{proof}[Proof of Theorems \ref{kummert} and \ref{sharpkummert}]
Write $p(z) = a(z_1,z_2) + b(z_1,z_2) z_3$.  For $z_1,z_2 \in \T$, by
direct computation
\begin{equation} \label{lurkisom1}
|p|^2 - |\tilde{p}|^2 = (1-|z_3|^2)(|a(z_1,z_2)|^2- |b(z_1,z_2)|^2).
\end{equation}
Then, $|a(z_1,z_2)|^2- |b(z_1,z_2)|^2$ is a non-negative two variable
trig polynomial of degree one in each variable.  As $p$ is stable,
$|a|^2 - |b|^2$ is in fact strictly positive on $\T^2$, since a zero
would imply $|p(z_1,z_2, \cdot)| = |\tilde{p}(z_1,z_2,\cdot)|$ and
this would mean $z_3 \mapsto p(z_1,z_2,z_3)$ has a zero on $\T$.

By the lemma, we may write 
\[
|a(z_1,z_2)|^2- |b(z_1,z_2)|^2 = |E(z_1,z_2)|^2 \text{ on } \T^2
\]
where $E$ is a vector polynomial with values in $\C^2$.

We also remark that since $p$ is stable, $a$ is stable.  By the
maximum principle we can then conclude that 
\[
\frac{\tilde{b}(z_1,z_2)}{a(z_1,z_2)}
\]
is analytic and has modulus strictly less than one (since $|b| =
|\tilde{b}|$ on $\T^2$ and since $|a| > |b|$ on $\T^2$). In
particular, $a+\tilde{b}$ is stable.

We may polarize formula \eqref{lurkisom1} and get for $z_1,z_2 \in \T$
\begin{equation} \label{lurkisom2}
p(z_1,z_2,z_3) \overline{p(z_1,z_2,\zeta_3)} - \tilde{p}(z_1,z_2,z_3)
\overline{\tilde{p} (z_1,z_2,\zeta_3)} = (1-z_3 \bar{\zeta}_3)
|E(z_1,z_2)|^2,
\end{equation}
which we rearrange into
\[
\begin{aligned}
& p(z_1,z_2,z_3) \overline{p(z_1,z_2,\zeta_3)} + z_3 \bar{\zeta}_3
|E(z_1,z_2)|^2 \\
&= \tilde{p}(z_1,z_2,z_3)
\overline{\tilde{p} (z_1,z_2,\zeta_3)} + |E(z_1,z_2)|^2.
\end{aligned}
\]

Then, for fixed $z_1,z_2 \in \T$ and for varying $z_3$, the map
\begin{equation} \label{lurkisom3}
\begin{bmatrix} p(z_1,z_2,z_3) \\ z_3 E(z_1,z_2) \end{bmatrix}
\mapsto \begin{bmatrix} \tilde{p}(z_1,z_2,z_3)
  \\ E(z_1,z_2) \end{bmatrix}
\end{equation}
gives a well-defined isometry $V(z_1,z_2)$ (which depends on
$z_1,z_2$) from the span of the elements on the left to the span of
the elements on the right (the span taken over the above vectors as
$z_3$ varies).  
More concretely, by examining coefficients of $z_3$, we map
\begin{equation} \label{lurkisom4}
\begin{bmatrix} a(z_1,z_2) \\ 0 \\ 0 \end{bmatrix}
\mapsto \begin{bmatrix} \tilde{b}(z_1,z_2) \\ E(z_1,z_2) \end{bmatrix},
\qquad \begin{bmatrix} b(z_1,z_2) \\ E(z_1,z_2) \end{bmatrix}
\mapsto \begin{bmatrix} \tilde{a}(z_1,z_2) \\ 0 \\ 0 \end{bmatrix}.
\end{equation}

This is how the ``lurking isometry argument'' traditionally works,
however $V(z_1,z_2)$ does not extend uniquely to define a unitary on
$\C^3$ and we would like to extend $V(z_1,z_2)$ so that $V$ is
rational in $z_1,z_2$.

Write $E = [E_1, E_2]^t$.  Define $F = [-\tilde{E}_2,
  \tilde{E}_1]^t$.  Then, $\ip{F(z_1,z_2)}{E(z_1,z_2)} = 0$ which
means the vector
\[
X(z_1,z_2) = \begin{bmatrix} 0 \\ F(z_1,z_2) \end{bmatrix}
\]
is orthogonal to both the left and right sides of \eqref{lurkisom3}.
So, to extend $V$ to a rational unitary, it is only a matter of
assigning 
\begin{equation} \label{assignX}
V(z_1,z_2) X(z_1,z_2) = \phi(z_1,z_2) X(z_1,z_2)
\end{equation}
where $\phi$ is a unimodular function, in such a way that $V$ is
rational.

Kummert cleverly gives the matrix $V$ explicitly.
\begin{claim} 
Define
\[
V = \frac{1}{a} \begin{bmatrix} \tilde{b} & \tilde{E}^t \\ E
  & \frac{E\tilde{E}^t - a(\tilde{a}+b)I}{a+\tilde{b}}
\end{bmatrix}.
\]
Then, $V$ is holomorphic in $\D^2$ and unitary valued on $\T^2$, and
$V$ satisfies \eqref{lurkisom3} for $(z_1,z_2) \in \T^2$ and hence for
all $(z_1,z_2) \in \cD^2$ by analyticity.
\end{claim}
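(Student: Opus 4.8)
The plan is to take Kummert's matrix $V$ as given and verify the three assertions in turn; the only real content is a polynomial identity forced by the defining property of $E$, and after that everything is bookkeeping. Throughout I take all reflections $q\mapsto\tilde q$ of polynomials in $z_1,z_2$ at multidegree $(1,1)$, so that $\tilde{\tilde q}=q$, $\tilde p=\tilde b+z_3\tilde a$, and, on $\T^2$, $\tilde q(z)=z_1z_2\overline{q(z)}$ and $|\tilde q(z)|=|q(z)|$.

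First I would record the identity: from $|a|^2-|b|^2=|E|^2$ on $\T^2$, multiplying by $z_1z_2$ and using $z_1z_2|q|^2=q\tilde q$ on $\T^2$ gives $a\tilde a-b\tilde b=\tilde E^tE$ on $\T^2$; since both sides are genuine polynomials (of multidegree at most $(2,2)$) and a polynomial vanishing on $\T^2$ is zero,
\[
\tilde E^tE=a\tilde a-b\tilde b
\]
identically. Writing $F=[-\tilde E_2,\tilde E_1]^t$ also gives the trivial identity $\tilde E^tF=0$, whence $E\tilde E^tE=(a\tilde a-b\tilde b)E$ and $E\tilde E^tF=0$. Holomorphy in $\D^2$ is then immediate: every numerator in $V$ is a polynomial and the only denominators are $a$ and $a+\tilde b$, both already shown stable in this proof, so $1/a$ and $1/(a+\tilde b)$ are holomorphic on a neighbourhood of $\cD^2$.

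For \eqref{lurkisom3}: since $p=a+bz_3$ and $\tilde p=\tilde b+z_3\tilde a$, the identity \eqref{lurkisom3} says precisely that $V$ sends $(a,0,0)^t\mapsto(\tilde b,E_1,E_2)^t$ and $(b,E_1,E_2)^t\mapsto(\tilde a,0,0)^t$. The first holds because the first column of $V$ is $\tfrac1a(\tilde b,E_1,E_2)^t$. For the second, the top entry is $\tfrac1a(\tilde b b+\tilde E^tE)=\tfrac1a(b\tilde b+a\tilde a-b\tilde b)=\tilde a$, and the lower $\C^2$-block contributes $\tfrac1a(bE+ME)$ with $M=(E\tilde E^t-a(\tilde a+b)I)/(a+\tilde b)$; using the identity, $ME=\bigl((a\tilde a-b\tilde b)-a(\tilde a+b)\bigr)E/(a+\tilde b)=-b(a+\tilde b)E/(a+\tilde b)=-bE$, so the lower block is $0$. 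These are rational identities valid on all of $\cD^2$, which accounts for the ``hence\dots\ by analyticity''.

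The substantive step is unitarity on $\T^2$, which I would deduce from the fact that $V$ maps the triple $s_1=(a,0,0)^t$, $s_2=(b,E_1,E_2)^t$, $s_3=(0,-\tilde E_2,\tilde E_1)^t$ to $t_1=(\tilde b,E_1,E_2)^t$, $t_2=(\tilde a,0,0)^t$, $t_3=\phi\,(0,-\tilde E_2,\tilde E_1)^t$ with $\phi=-(\tilde a+b)/(a+\tilde b)$: the first two maps were just checked, and $s_3\mapsto t_3$ follows from $\tilde E^tF=0$ (top entry) together with $MF=-a(\tilde a+b)F/(a+\tilde b)=a\phi F$ (using $E\tilde E^tF=0$). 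On $\T^2$ the Gram matrices of $\{s_i\}$ and $\{t_i\}$ then coincide: the diagonals are $(|a|^2,|a|^2,|E|^2)$ on both sides (using $|\tilde q|=|q|$, $|\phi|=1$, $|E|^2=|a|^2-|b|^2$), $s_1^*s_2=\bar ab=\overline{\tilde b}\,\tilde a=t_1^*t_2$ on $\T^2$, and the $(1,3)$ and $(2,3)$ entries all vanish on $\T^2$ (each being $0$ or a multiple of $E^*F=0$). Finally $\det[\,s_1\ s_2\ s_3\,]=a(\tilde E_1E_1+\tilde E_2E_2)=a(a\tilde a-b\tilde b)=az_1z_2(|a|^2-|b|^2)$ is nonzero on $\T^2$, so with $S=[\,s_1\ s_2\ s_3\,]$ and $T=[\,t_1\ t_2\ t_3\,]$ one has $V=TS^{-1}$ and $V^*V=(S^*)^{-1}(T^*T)S^{-1}=(S^*)^{-1}(S^*S)S^{-1}=I$ on $\T^2$. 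I expect the only delicate point to be keeping the genuine polynomial identities apart from the $\T^2$-only ones ($|\tilde q|=|q|$, $E^*F=0$, $|\phi|=1$) and correctly manipulating the block $M$; there is no serious obstacle beyond that.
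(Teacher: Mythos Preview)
Your proof is correct and follows essentially the same route as the paper: both verify holomorphy from stability of $a$ and $a+\tilde b$, establish the key polynomial identity $\tilde E^{t}E=a\tilde a-b\tilde b$, use it to check the two mappings in \eqref{lurkisom4}, and then handle the orthogonal direction $X=(0,F)^{t}$ via $VX=\phi X$ with $\phi=-(\tilde a+b)/(a+\tilde b)$. The only difference is in how unitarity is argued: the paper appeals to the abstract fact that \eqref{lurkisom3} is already known to be an isometry on its span and $X$ is orthogonal to both sides, so any unimodular extension is unitary, whereas you recompute the Gram matrices of $\{s_i\}$ and $\{t_i\}$ directly and add the determinant check $\det S=a(a\tilde a-b\tilde b)\ne 0$ on $\T^2$ to ensure $S$ is invertible --- a point the paper leaves implicit but which is indeed needed.
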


First, $V$ is holomorphic since $a$ and $a+ \tilde{b}$ are stable.
Using this definition of $V$, the fact that $V$ is unitary valued on
$\T^2$ will follow from checking that \eqref{lurkisom3} and
\eqref{assignX} hold (i.e. $V(z_1,z_2)$ performs the mapping as
indicated in \eqref{lurkisom3} and \eqref{assignX}).   

Indeed, it can be directly checked that the equivalent condition in
\eqref{lurkisom4} holds because of the relation
\[
\begin{aligned}
\tilde{E}(z_1,z_2)^tE(z_1,z_2) &= z_1z_2|E(z_1,z_2)|^2 \\
&=
z_1z_2(|a(z_1,z_2)|^2 - |b(z_1,z_2)|^2) = a \tilde{a} - b\tilde{b}.
\end{aligned}
\]
In addition, \eqref{assignX} holds because
\[
V(z_1,z_2) X(z_1,z_2) = -\frac{\tilde{a}+b}{a +\tilde{b}} X(z_1,z_2)
\]
since $\tilde{E}^tF = 0$, which is indeed a unimodular multiple of
$X$.  This proves the claim.

This means $V$ is a two variable rational matrix valued inner
function.  It was proved in \cite{aK89} (see also \cite{BSV05}) that
such functions have transfer function representations.  Namely, there
exists a $(2 + n_1 + n_2) \times (2+n_1+n_2)$ block unitary
\[
U = \begin{bmatrix} A & B \\ C & D \end{bmatrix} = \begin{bmatrix} A &
  B_1 & B_2 \\
C_1 & D_{11} & D_{12} \\
C_2 & D_{21} & D_{22} \end{bmatrix}
\]
where $B$ is a $2\times (n_1+n_2)$ matrix, $C$ is a $(n_1+n_2) \times
2$, $D$ is a $(n_1+n_2) \times (n_1+n_2)$ (all subdivided as
indicated) such that $V(z_1,z_2) = A + B d(z_1,z_2) (I - D
d(z_1,z_2))^{-1} C$ where
\[
d(z_1,z_2) = \begin{bmatrix} z_1I_1 & 0 \\ 0 & z_2 I_2 \end{bmatrix}.
\]
Here $I_1, I_2$ are the $n_1$, $n_2$-dimensional identity matrices,
respectively. 

Such a representation is equivalent to the formula
\begin{equation} \label{lurkisom5}
U \begin{bmatrix} I \\ z_1 G_1(z_1,z_2) \\ z_2 G_2(z_1,z_2) \end{bmatrix}
= \begin{bmatrix} V(z_1,z_2) \\ G_1(z_1,z_2) \\ G_2(z_1,z_2) \end{bmatrix}
\end{equation}
where $G_1, G_2$ are some $C^{n_1}$, $\C^{n_2}$ valued functions
(which can in fact be explicitly solved for).

Define 
\[
Y = \begin{bmatrix} p \\ z_3 E\end{bmatrix} \text{ and } H_j = G_j Y
  \text{ for } j =1,2.
\]
Then, 
\[
U \begin{bmatrix} I \\ z_1 G_1 \\ z_2
  G_2 \end{bmatrix} Y =
U \begin{bmatrix} Y \\ z_1 G_1 Y \\ z_2 G_2 Y \end{bmatrix} 
= U \begin{bmatrix} p \\ z_3 E \\ z_1 H_1 \\ z_2 H_2 \end{bmatrix} 
= \begin{bmatrix} V Y \\ H_1 \\ H_2 \end{bmatrix} 
= \begin{bmatrix}
  \tilde{p} \\ E \\ H_1 \\ H_2 \end{bmatrix}
\]
where the equations follow in order by: algebra, definitions of $Y,
H_j$, \eqref{lurkisom5}, and \eqref{lurkisom3}.

Since $U$ is a unitary and since
\[
U \begin{bmatrix} p \\ z_3 E \\ z_1 H_1 \\ z_2 H_2 \end{bmatrix}
= \begin{bmatrix} \tilde{p} \\ E \\ H_1 \\ H_2 \end{bmatrix}
\]
we have
\[
\begin{aligned}
& |p|^2 + |z_3|^2|E|^2 + |z_1|^2|H_1|^2 + |z_2|^2|H_2|^2 \\
& = |\tilde{p}|^2 + |E|^2 + |H_1|^2 + |H_2|^2
\end{aligned}
\]
which can be rearranged to give
\[
|p|^2 - |\tilde{p}|^2 = \sum_{j=1,2} (1-|z_j|^2) |H_j|^2 + (1-|z_3|^2)
|E|^2
\]
Even though we have not verified that $H_1$ and $H_2$ are polynomials,
this is enough to prove $p$ is an Agler denominator by \cite{gK10}.
In fact, Theorem \ref{bounds} forces $H_1$, $H_2$ to be polynomials of
multi-degree $(0,1,1)$, $(1,0,1)$ and the sums of squares $|H_1|^2,
|H_2|^2$ can be rewritten as sums of four squares each (by
dimensionality; see remark \ref{SOSremark}).
\end{proof}

\section*{Acknowledgments}
Thanks to John M$^{\text{c}}$Carthy and Jeff Geronimo for useful
discussions, and to Joseph Ball for bringing the work of A. Kummert to our
attention. 

\bibliography{symsa}

\end{document}